\numberwithin{equation}{section}
\theoremstyle{plain}
\newtheorem{thm}{\protect\theoremname}[section]
\theoremstyle{plain}
\newtheorem{lem}{\protect\lemmaname}[section]
\providecommand{\lemmaname}{Lemma}
\providecommand{\theoremname}{Theorem}
\begin{document}
\title[Hecke-type series]{Hecke-type series involving infinite products}
\author{Bing He}
\address{School of Mathematics and Statistics, Central South University \\
Changsha 410083, Hunan, People's Republic of China}
\email{yuhe001@foxmail.com; yuhelingyun@foxmail.com}
\keywords{Hecke-type series; infinite product; truncated Hecke-type series;
$q$-transformation formula; partition function}
\subjclass[2000]{05A30; 33D15; 11E25; 11P81}
\begin{abstract}
Since the study by Jacobi and Hecke, Hecke-type series have received
extensive attention. Especially, Hecke-type series involving infinite
products have attracted broad interest among many mathematicians including
Kac, Peterson, Andrews, Bressoud and Liu. Motivated by the works of
these people, we study Hecke-type series involving infinite products.
In particular, we establish some Hecke-type series involving infinite
products and then obtain the truncated versions of these series as
well as some other known series of the same type. As consequences,
three families of inequalities for certain partition functions are
also presented. Our proofs heavily rely on a formula from the work
of Zhi-Guo Liu \cite{L13}.
\end{abstract}

\maketitle

\section{Introduction}

A series is of Hecke-type if it has the form
\[
\sum_{(m,n)\in D}(-1)^{H(m,n)}q^{Q(m,n)+L(m,n)},
\]
where $H$ and $L$ are linear forms, $Q$ is a quadratic form and
$D$ is a subset of $\mathbb{Z}\times\mathbb{Z}$ for which $Q(m,n)\geq0$.
The following classical identity due to Jacobi, expresses an  infinite
product  as a Hecke-type series \cite[(3.15)]{A84}: 
\[
(q;q)_{\infty}^{3}=\sum_{n=-\infty}^{\infty}\sum_{m\geq|n|}(-1)^{m}q^{m(m+1)/2},
\]
where 
\[
(a;q)_{\infty}:=\prod_{k=0}^{\infty}(1-aq^{k}).
\]
Here and in the sequel, we assume that $|q|<1$. Motivated by this
identity, Hecke \cite{H} systematically investigated theta series
relating indefinite quadratic forms. For exmaple, Hecke \cite[p. 425]{H}
presented the following identity, which is originally due to Rogers
\cite[p. 323]{R}:
\begin{equation}
(q;q)_{\infty}^{2}=\sum_{m=-\infty}^{\infty}\sum_{|m|\leq n/2}(-1)^{m+n}q^{(n^{2}-3m^{2})/2+(m+n)/2}.\label{eq:11-4}
\end{equation}
The identity \eqref{eq:11-4} also expresses an infinite product as
a Hecke-type series.

Since the study by Jacobi and Hecke, Hecke-type series involving infinite
products have received a lot of attention among many mathematicians.
Kac and Peterson \cite{KP,KP2} showed that numerous results similar
to Hecke's identity may be proved by using affine Lie algebra and
presented \cite[final equation]{KP2}
\begin{equation}
(q;q)_{\infty}(q^{2};q^{2})_{\infty}=\sum_{n=0}^{\infty}\sum_{n\geq|3m|}(-1)^{n}q^{n(n+1)/2-4m^{2}}.\label{eq:11-5}
\end{equation}
Using his constant term method, Andrews \cite{A84} derived \eqref{eq:11-5}
and the following beautiful formula:
\begin{equation}
(q;q)_{\infty}(q^{2};q^{2})_{\infty}=\sum_{n=0}^{\infty}\sum_{n\geq|2m|}(-1)^{m+n}q^{n(n+1)/2-m^{2}}.\label{eq:11-10}
\end{equation}
In \cite{A86} Andrews used the technique of Bailey chain to study
the fifth and seventh order mock theta functions. His study yielded
many deep and beautiful identities for Hecke-type series. For example,
Andrews showed \cite[(5.15)]{A86}
\begin{equation}
(q;q)_{\infty}^{2}(q;q^{2})_{\infty}=\sum_{n=0}^{\infty}\sum_{j=-n}^{n}(-1)^{j}(1-q^{2n+1})q^{n(3n+1)/2-j^{2}}.\label{eq:1-14}
\end{equation}
In \cite{Br} Bressoud deduced \eqref{eq:11-10} and the following
interesting formula:
\begin{equation}
(q;q)_{\infty}(q^{2};q^{2})_{\infty}=\sum_{n=0}^{\infty}\sum_{j=-n}^{n}(-1)^{n}q^{n(n+1)-j(j+1)/2}\label{eq:11-6}
\end{equation}
by using $q$-Hermite polynomials. With some series manipulations
we can show that \eqref{eq:11-6} is equivalent to the following identity:
\begin{equation}
(q;q)_{\infty}(q^{2};q^{2})_{\infty}=\sum_{n=0}^{\infty}\sum_{j=0}^{n}(-1)^{n}(1-q^{2n+2})q^{n^{2}+n-j(j+1)/2}.\label{eq:11-2}
\end{equation}
In addition, using some general $q$-series expansions, Liu \cite{L5,L1,L6}
obtained many intriguing identities for Hecke-type series. For example,
Liu showed \cite[(7.11), (7.17)  and (8.22)]{L1}:
\begin{align}
 & (q;q)_{\infty}^{2}=\sum_{n=0}^{\infty}\sum_{j=-n}^{n}(-1)^{j}(1-q^{2n+1})q^{n(2n+1)-j(3j+1)/2},\label{eq:11-3}\\
 & \frac{(q^{2};q^{2})_{\infty}^{2}}{(q;q^{2})_{\infty}}=\sum_{n=0}^{\infty}\sum_{j=0}^{2n}(-1)^{n}(1+q^{2n+1})q^{3n^{2}+2n-j(j+1)/2},\label{eq:1-9}\\
 & (q;q)_{\infty}(q^{2};q^{2})_{\infty}=\sum_{n=0}^{\infty}\sum_{j=-n}^{n}(-1)^{j}(1-q^{2n+1})q^{n(2n+1)-j^{2}}.\label{eq:11-1}
\end{align}
We can show with series manipulations that \eqref{eq:11-4} is equivalent
to \eqref{eq:11-3} while both of \eqref{eq:11-5} and \eqref{eq:11-10}
are equivalent to \eqref{eq:11-1}.

Finding Hecke-type series involving infinite products becomes an interesting
topic. In this paper, motivated by the works of Kac and Peterson \cite{KP,KP2},
Andrews \cite{A84,A86}, Bressoud \cite{Br} and Liu \cite{L5,L1,L6},
we study Hecke-type series involving infinite products.

In section \ref{sec:P} we establish the following two identities.
\begin{thm}
\label{t1-3} We have
\begin{equation}
(q;q)_{\infty}^{2}=\sum_{n=0}^{\infty}\sum_{j=-n-1}^{n}(-1)^{j+1}(1-q^{2n+2})q{}^{2n^{2}+3n+1-j(3j+1)/2}.\label{eq:1-13}
\end{equation}
\end{thm}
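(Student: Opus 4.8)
The plan is to obtain \eqref{eq:1-13} as a specialization of the general $q$-series expansion of Liu \cite{L13} that underlies all the identities collected in the introduction. Concretely, I would choose the free parameters in Liu's formula so that the infinite-product side reduces exactly to $(q;q)_{\infty}^{2}$; the other side then becomes a double series whose summand carries a quadratic exponent in the outer index together with the pentagonal form $j(3j+1)/2$ in the inner index. The analytic content is entirely contained in Liu's formula, so the bulk of the remaining work is bookkeeping: one must show that the double series produced this way can be rearranged into the canonical shape $\sum_{n\ge0}\sum_{j=-n-1}^{n}(-1)^{j+1}(1-q^{2n+2})q^{2n^{2}+3n+1-j(3j+1)/2}$.

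For that rearrangement I would first expand the factor $(1-q^{2n+2})$ and split the right-hand side of \eqref{eq:1-13} into two double sums. Using the elementary identities $2n^{2}+3n+1=2(n+1)^{2}-(n+1)$ and $2n^{2}+5n+3=2(n+1)^{2}+(n+1)$ and then setting $m=n+1$, both pieces acquire the common outer exponent $2m^{2}\mp m$ and the common inner range $-m\le j\le m-1$, so that the right-hand side collapses to
\[
\sum_{m=1}^{\infty}\sum_{j=-m}^{m-1}(-1)^{j+1}(1-q^{2m})\,q^{2m^{2}-m-j(3j+1)/2}.
\]
This is the form I would match term-by-term against the output of Liu's formula. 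On the summation region the total exponent $2m^{2}-m-j(3j+1)/2$ is nonnegative: being an upward parabola in $j$, it attains its minimum over $-m\le j\le m-1$ at the endpoints, where it equals $(m^{2}-m)/2$ at $j=-m$ and $(m^{2}+3m-2)/2$ at $j=m-1$, both $\ge0$. This confirms that the series is genuinely of Hecke type.

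As an independent consistency check I would compare \eqref{eq:1-13} with the already established identity \eqref{eq:11-3}, which shares the same pentagonal quadratic $j(3j+1)/2$ and a linear factor $1-q^{2n+1}$. It is worth stressing that \eqref{eq:1-13} is \emph{not} a mere reindexing of \eqref{eq:11-3}: the inner range, the sign $(-1)^{j+1}$ versus $(-1)^{j}$, and the linear factor all differ, so the only available bridge between the two is their common value $(q;q)_{\infty}^{2}$. The involution $j\mapsto-1-j$, which maps the range $\{-n-1,\dots,n\}$ to itself, flips the sign $(-1)^{j+1}$, and shifts the exponent by $-(2j+1)$, is the natural symmetry to exploit when reconciling the two forms.

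The main obstacle I anticipate is twofold. First, one must locate the precise specialization of the parameters in Liu's formula for which the product side telescopes to $(q;q)_{\infty}^{2}$ rather than to a more complicated eta-quotient; this is the only genuinely creative step and is where the shape of \eqref{eq:1-13} is actually pinned down. The difficulty is sharpened by the fact that the quadratic form $2m^{2}-m-j(3j+1)/2$ is indefinite, so no elementary Jacobi-triple-product evaluation is available and one must rely on Liu's decoupling of the double sum. Second, the sign and index bookkeeping in the reindexing above must be carried out with care, together with a routine justification that, for $|q|<1$, all rearrangements of the absolutely convergent double series are legitimate.
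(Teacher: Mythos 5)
Your proposal correctly identifies the engine---Liu's transformation from \cite{L13}---but it stops short of a proof at exactly the two points where the real work lies, and it misreads how the product $(q;q)_{\infty}^{2}$ actually arises. The formula in question, \eqref{eq:22-6} (specialized to \eqref{eq:22-7} via $q/b=\alpha q^{m+1}$), is a \emph{terminating} transformation between a $_{4}\phi_{3}$ and a finite sum of $_{3}\phi_{2}$'s; it has no infinite-product side at all, so there is no choice of parameters for which ``the infinite-product side reduces exactly to $(q;q)_{\infty}^{2}$.'' In the paper the product emerges quite differently: one sets $\alpha=q^{2}$, $\beta=q$, $c=d=0$ and lets $a\rightarrow0$ in \eqref{eq:22-7} to obtain the \emph{finite} identity \eqref{eq:1-12}, rewrites its left-hand side as $\sum_{n=0}^{m}(-1)^{n}q^{n(n+1)/2}(q;q)_{2m-n+2}/(q;q)_{n}$, lets $m\rightarrow\infty$, and only then gets $(q;q)_{\infty}\sum_{n\geq0}(-1)^{n}q^{n(n+1)/2}/(q;q)_{n}=(q;q)_{\infty}^{2}$ by Euler's identity \eqref{eq:22-4}. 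This limit-plus-Euler mechanism is entirely absent from your plan.

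The more serious gap is that Liu's transformation by itself delivers an inner terminating $_{3}\phi_{2}$, not a pentagonal-number sum: the quadratic $j(3j+1)/2$ does not ``come with'' the formula, contrary to what your sketch assumes. The paper must evaluate the inner sum $\sum_{k=0}^{n}(q^{-n},q^{n+2};q)_{k}q^{k}$ in closed theta-like form, and this requires two further ingredients you never supply: Liu's finite-sum identity \eqref{eq:1-4} (from \cite[Lemma 4.1]{L5}, applied with $a=q^{2}$, $c=0$) and the pentagonal evaluation \eqref{eq:1-11} (Liu's (8.28) in \cite{L1}), combined to give \eqref{eq:1-10}. Without that evaluation, the specialization of \eqref{eq:22-7} yields only a $q$-hypergeometric double sum, never the Hecke-type shape. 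Your reindexing $m=n+1$ of the right-hand side and the nonnegativity check of the exponent $2m^{2}-m-j(3j+1)/2$ are correct but cosmetic, and the consistency remarks about \eqref{eq:11-3} prove nothing. Since you yourself flag ``locating the precise specialization'' as an unresolved obstacle and offer no substitute for the inner-sum evaluation, the proposal is a plausible research plan rather than a proof: both of the steps that actually pin down \eqref{eq:1-13} remain unexecuted.
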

\begin{thm}
\label{t1-4} We have
\begin{equation}
\frac{(q^{2};q^{2})_{\infty}^{2}}{(q;q^{2})_{\infty}}=\sum_{n=0}^{\infty}\sum_{j=-n-1}^{n}(-1)^{n}(1-q^{4n+4})q^{3n^{2}+4n+1-j(2j+1)}.\label{eq:1-8}
\end{equation}
\end{thm}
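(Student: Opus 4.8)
The left-hand side of \eqref{eq:1-8} is \emph{identical} to that of Liu's identity \eqref{eq:1-9}, so rather than invoke the expansion of \cite{L13} from scratch I would prove Theorem \ref{t1-4} by reducing it to \eqref{eq:1-9}, which may be taken as known. The one structural difference between the two right-hand sides is the inner quadratic form: \eqref{eq:1-9} sums $q^{-k(k+1)/2}$ over $0\le k\le 2n$, whereas \eqref{eq:1-8} sums $q^{-j(2j+1)}$ over $-n-1\le j\le n$. The key observation is that these two inner sums are governed by the \emph{same} numbers. Indeed, splitting the range $-n-1\le j\le n$ into $j\ge0$ and $j\le-1$ and using $j(2j+1)=\frac{(2j)(2j+1)}{2}$ for $j\ge0$ and $(-m)(-2m+1)=\frac{(2m-1)(2m)}{2}$ for $j=-m\le-1$, one checks that as $j$ runs over $-n-1\le j\le n$ the quantity $j(2j+1)$ runs over the triangular numbers $\frac{k(k+1)}{2}$ for $k=0,1,\dots,2n+1$, each exactly once. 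Hence, writing $S_N:=\sum_{k=0}^{N}q^{-k(k+1)/2}$, the inner sum of \eqref{eq:1-8} is precisely $S_{2n+1}$, while that of \eqref{eq:1-9} is $S_{2n}$.

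With this reindexing the right-hand side of \eqref{eq:1-8} becomes $\sum_{n\ge0}(-1)^n(1-q^{4n+4})q^{3n^2+4n+1}S_{2n+1}$, and \eqref{eq:1-9} asserts that $\frac{(q^2;q^2)_\infty^2}{(q;q^2)_\infty}=\sum_{n\ge0}(-1)^n(1+q^{2n+1})q^{3n^2+2n}S_{2n}$. Setting $b_n:=(-1)^nq^{3n^2+2n}$, a one-line computation gives the factorization
\[
(-1)^n(1-q^{4n+4})q^{3n^2+4n+1}=b_nq^{2n+1}+b_{n+1},
\]
since $b_nq^{2n+1}=(-1)^nq^{3n^2+4n+1}$ and $b_{n+1}=-(-1)^nq^{3n^2+8n+5}$ with $3n^2+8n+5-(3n^2+4n+1)=4n+4$; note too that the weight in \eqref{eq:1-9} is exactly $b_n(1+q^{2n+1})$. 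Subtracting the two weighted sums and using $S_{2n+1}-S_{2n}=q^{-(2n+1)(n+1)}$ and $S_{2n}-S_{2n-1}=q^{-n(2n+1)}$ together with a shift of the summation index in the $b_{n+1}$-part, the contribution of $b_nq^{2n+1}(S_{2n+1}-S_{2n})$ equals $\sum_{n\ge0}(-1)^nq^{n(n+1)}$, while the telescoped remainder equals $-\sum_{n\ge0}(-1)^nq^{n(n+1)}$; the two cancel, so the difference of the two right-hand sides vanishes identically and Theorem \ref{t1-4} follows from \eqref{eq:1-9}.

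The step I expect to carry the whole argument --- and the one that is easy to miss --- is the triangular-number reindexing of the inner sum; without it the forms $j(2j+1)$ and $k(k+1)/2$ look unrelated and the two theorems appear to live on different lattices. A second point requiring care is the off-by-one between the ranges $S_{2n+1}$ and $S_{2n}$: it must be absorbed exactly by the change of prefactor from $(1+q^{2n+1})$ to $(1-q^{4n+4})$ and by the shift $3n^2+2n\mapsto 3n^2+4n+1$ in the base exponent, which is precisely what the factorization $b_nq^{2n+1}+b_{n+1}$ encodes. All series in sight converge absolutely for $|q|<1$ (the apparent negative exponents in $S_N$ are dominated by the outer factor $q^{3n^2}$), so the rearrangements are legitimate. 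Finally, one could instead follow the route advertised in the abstract and derive both \eqref{eq:1-13} and \eqref{eq:1-8} directly from the formula of Liu \cite{L13} by a single specialization of its parameters; that approach avoids the appeal to \eqref{eq:1-9}, at the cost of the heavier bookkeeping needed to collapse Liu's double series to the stated product.
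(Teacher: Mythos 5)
Your proof is correct, but it takes a genuinely different route from the paper's. You deduce \eqref{eq:1-8} from Liu's known identity \eqref{eq:1-9} by purely elementary series manipulations, and every step checks: the reindexing is valid, since for $j\ge0$ one has $j(2j+1)=\tfrac{2j(2j+1)}{2}$ and for $j=-m\le-1$ one has $j(2j+1)=\tfrac{(2m-1)\,2m}{2}$, so the inner sum of \eqref{eq:1-8} is indeed $S_{2n+1}=\sum_{k=0}^{2n+1}q^{-k(k+1)/2}$ (the paper records this same fact as \eqref{eq:1-5}, deduced there from a formula of \cite{L1} rather than verified bijectively as you do); the factorization $(-1)^n(1-q^{4n+4})q^{3n^2+4n+1}=b_nq^{2n+1}+b_{n+1}$ with $b_n=(-1)^nq^{3n^2+2n}$ is right; and the telescoping works as claimed, the two residual pieces being $\sum_{n\ge0}(-1)^nq^{n(n+1)}$ and $-\sum_{n\ge0}(-1)^nq^{n(n+1)}$ (the $-1$ from the index shift in the $b_{n+1}$-part being exactly the $m=0$ term of the second series), while absolute convergence (the $n$-th block has minimal exponent $n^2+n$) licenses all rearrangements. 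The paper proceeds quite differently: it first establishes the \emph{finite} identity \eqref{eq:1-7} of Lemma \ref{l2-1} by specializing Liu's transformation formula \eqref{eq:22-7} (after $q\mapsto q^2$, with $\alpha=q^4$, $\beta=q^2$, $c=q^3$, $d=0$, then $a\to0$), using \eqref{eq:1-6} to evaluate the inner ${}_3\phi_2$, and only then lets $m\to\infty$ and applies \eqref{eq:22-4}. Your route buys brevity and avoids the machinery of \cite{L13}, at the cost of presupposing \eqref{eq:1-9}; the paper's route is heavier, but its intermediate identity \eqref{eq:1-7} is precisely the truncated form needed later to prove Theorem \ref{t1-7} and the partition inequalities of Theorem \ref{tt3}, which your argument does not supply. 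A side observation: your reduction shows that \eqref{eq:1-8} is equivalent to \eqref{eq:1-9} by manipulations of the same kind the paper itself invokes to relate \eqref{eq:11-4} to \eqref{eq:11-3}, which places the paper's remark that \eqref{eq:1-8} appears to be new in a slightly different light.
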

It seems that both of the identities \eqref{eq:1-13} and \eqref{eq:1-8}
are new and we have not found them in the literature.

We can prove the identities \eqref{eq:11-2} and \eqref{eq:11-1}
in the same way as \eqref{eq:1-13} and \eqref{eq:1-8} adopting our
method. In Section \ref{sec:3} we give new proofs for \eqref{eq:11-2}
and \eqref{eq:11-1}.

In \cite{AM} Andrews and Merca investigated the truncated version
of Euler's pentagonal number theorem \cite{A98}:
\[
(q;q)_{\infty}=\sum_{n=-\infty}^{\infty}(-1)^{n}q^{n(3n-1)/2}.
\]
Their work has opened up a new study on truncated theta series. Since
then many people followed this topic. For example, Guo and Zeng \cite{GZ}
considered two well-known identities of Gauss \cite{A98}:
\begin{align}
 & \frac{(q;q)_{\infty}}{(-q;q)_{\infty}}=\sum_{n=-\infty}^{\infty}(-1)^{n}q^{n^{2}},\label{eq:12-5}\\
 & \frac{(q^{2};q^{2})_{\infty}}{(-q;q^{2})_{\infty}}=\sum_{n=0}^{\infty}(-1)^{n}q^{n(2n+1)}(1-q^{2n+1}).\nonumber 
\end{align}
In \cite{CHM} Chan, Ho and Mao presented a truncated theorem from
the quintuple product identity \cite{A98}: 
\begin{align*}
 & (z^{-1};q)_{\infty}(zq;q)_{\infty}(q;q)_{\infty}(q/z^{2};q^{2})_{\infty}(z^{2}q;q^{2})_{\infty}\\
 & =\sum_{n=-\infty}^{\infty}(z^{3n}-z^{-3n-1})q^{n(3n+1)/2}
\end{align*}
while Mao \cite{M} and Yee \cite{Y} independently proved the truncated
theorem for the triple product identity. Recently, Wang and Yee \cite{WY-1}
obtained the truncated versions for \eqref{eq:1-14}, \eqref{eq:11-3}
and an identity equivalent to \eqref{eq:1-9}. Motivated by these
works, we investigate the truncated versions for \eqref{eq:11-6},
\eqref{eq:11-2}, \eqref{eq:11-1}, \eqref{eq:1-13} and \eqref{eq:1-8}
in Section \ref{sec:5}.

Taking the truncated series on the right-hand sides of \eqref{eq:11-6}
and \eqref{eq:11-2}, we have the following results.
\begin{thm}
\label{t1-1} For any nonnegative integer $m$, we have
\begin{equation}
\begin{aligned} & \frac{1}{(q^{2};q^{2})_{\infty}(q;q)_{\infty}}\sum_{n=0}^{m}(-1)^{n}q^{n(n+1)}\sum_{j=-n}^{n}q^{-j(j+1)/2}\\
 & =1+(-1)^{m}q^{m(m+1)/2}\sum_{k=m+1}^{\infty}\sum_{i=0}^{k}\frac{(-q^{-m-1/2};q)_{i}(q^{1/2};q)_{k-i}}{(q;q)_{i}(q;q)_{k-i}}q^{(m+1)i+k}{k-1 \brack m}_{q},
\end{aligned}
\label{eq:11-9}
\end{equation}
where 
\[
(a;q)_{l}:=\frac{(a;q)_{\infty}}{(aq^{l};q)_{\infty}}
\]
and 
\[
{M \brack N}_{q}:=\begin{cases}
\frac{(q;q)_{M}}{(q;q)_{N}(q;q)_{M-N}}, & if\;0\leq N\leq M,\\
0, & otherwise.
\end{cases}
\]
\end{thm}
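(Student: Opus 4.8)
The plan is to derive \eqref{eq:11-9} from the full identity \eqref{eq:11-6} together with Liu's expansion formula \cite{L13}. Writing $P=(q;q)_{\infty}(q^{2};q^{2})_{\infty}$ and dividing \eqref{eq:11-6} by $P$, the complete sum $\frac{1}{P}\sum_{n=0}^{\infty}(-1)^{n}q^{n(n+1)}\sum_{j=-n}^{n}q^{-j(j+1)/2}$ equals $1$. Consequently the left-hand side of \eqref{eq:11-9}, being the partial sum $n=0,\dots,m$ of the same quotient, equals
\[
1-\frac{1}{P}\sum_{n=m+1}^{\infty}(-1)^{n}q^{n(n+1)}\sum_{j=-n}^{n}q^{-j(j+1)/2},
\]
so the theorem reduces to identifying this tail with the negative of the double series on the right of \eqref{eq:11-9}.

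To evaluate the tail I would bring in Liu's formula \cite{L13}. First I rewrite the inner partial theta sum: completing the square in $q^{-j(j+1)/2}$ exhibits a half-integer shift, which I expect to be the source of the factors $(q^{1/2};q)_{k-i}$ and $(-q^{-m-1/2};q)_{i}$ in the answer. Liu's expansion should then let $\frac{1}{P}$ (the reciprocal of the two infinite products) multiplied by the theta piece be written as a single $q$-hypergeometric double series; the convolution shape $\sum_{i=0}^{k}\frac{(-q^{-m-1/2};q)_{i}(q^{1/2};q)_{k-i}}{(q;q)_{i}(q;q)_{k-i}}$ should appear precisely because it is the Cauchy product of the two $q$-binomial series $\frac{(-q^{-m-1/2}t;q)_{\infty}}{(t;q)_{\infty}}$ and $\frac{(q^{1/2}s;q)_{\infty}}{(s;q)_{\infty}}$ supplied by the $q$-binomial theorem.

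Next I would interchange the order of the $n$- and $k$-summations. The only remaining $n$-dependence should be through a finite sum that I evaluate by a $q$-Chu--Vandermonde summation; this is what I expect to collapse into the Gaussian binomial coefficient ${k-1 \brack m}_{q}$, simultaneously forcing $k\ge m+1$ and generating the prefactor $(-1)^{m}q^{m(m+1)/2}$. Assembling the pieces then gives the stated right-hand side and completes the argument.

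The main obstacle is twofold: choosing the correct specialization of the several free parameters in Liu's formula so that the half-integer powers $q^{1/2}$ and $q^{-m-1/2}$ come out exactly, and then executing the $q$-binomial summation over $n$ that produces ${k-1 \brack m}_{q}$ cleanly. Justifying the interchange of summation (absolute convergence for $|q|<1$) and tracking the fractional exponents carefully throughout are the steps where errors are easiest to make.
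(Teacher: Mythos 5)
Your plan has the right endgame but a genuine hole at its center. The reduction of \eqref{eq:11-9} via \eqref{eq:11-6} to ``partial sum $=1-$ tail$/P$'' is harmless, but it commits you to evaluating an \emph{infinite} tail $\sum_{n=m+1}^{\infty}$, and the tool you invoke cannot act on it: Liu's transformation \eqref{eq:22-6}--\eqref{eq:22-7} is a terminating transformation --- the factor $(q^{-m};q)_{n}$ forces $n\leq m$ on both sides --- so it applies to the truncation $\sum_{n=0}^{m}$ itself, never to a tail starting at $n=m+1$. The missing idea is a closed form for the partial sum. The paper first proves the lemma \eqref{eq:4-12},
\[
(-1)^{m}q^{m(m+1)/2}\sum_{n=0}^{m}\frac{(q^{-m};q)_{n}(q^{m+1};q)_{n+1}}{(q;q^{2})_{n+1}}=\sum_{n=0}^{m}(-1)^{n}q^{n(n+1)}\sum_{j=-n}^{n}q^{-j(j+1)/2},
\]
obtained from \eqref{eq:22-7} with $\alpha=\beta=q$, $c=q^{3/2}$, $d=-q^{3/2}$ and $a\rightarrow0$, after the inner $_{3}\phi_{2}$ is evaluated through a chain consisting of \eqref{eq:1-2} with $\alpha=1$, $c=q^{1/2}$, $d=-q^{1/2}$, two applications of \eqref{eq:1-1}, and the substitution $q\mapsto q^{-1}$. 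Your proposed ``completing the square in $q^{-j(j+1)/2}$'' is not a substitute for this chain; moreover, the half-integer parameters $q^{1/2}$ and $-q^{-m-1/2}$ in \eqref{eq:11-9} do not come from the theta exponent at all, but from the later factorization $(q^{2n+3};q^{2})_{\infty}=(q^{n+3/2};q)_{\infty}(-q^{n+3/2};q)_{\infty}$ that appears when $1/P$ times the lemma's left-hand side is written as a ratio of infinite products.

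Where your sketch does agree with the paper is in the final mechanical steps: two applications of the $q$-binomial theorem \eqref{eq:22-2} produce series in $i$ and $j$ with arguments $q^{(m+n+2)i}$ and $q^{(n+1)j}$; interchanging and summing over $n$ with the terminating case \eqref{eq:22-3} gives $\sum_{n=0}^{m}\frac{(q^{-m};q)_{n}}{(q;q)_{n}}q^{(i+j)n}=(q^{i+j-m};q)_{m}$, which depends on $i,j$ only through $k=i+j$; grouping by $k$ yields exactly the convolution you predicted, the terms $0<k\leq m$ vanish since $(q^{k-m};q)_{m}=0$ there, the $k=0$ term produces the constant $1$, and $k\geq m+1$ produces ${k-1 \brack m}_{q}$. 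Note in particular that the coupling through $q^{(i+j)n}$ means this is \emph{not} a Cauchy product of two independent one-variable series as you suggest --- the convolution emerges only after the $n$-sum collapses. So your intuition about the shape of the answer is sound, but as written the proposal's central step (``Liu's expansion should then let $1/P$ times the theta piece be written as a double series'') is not executable on the tail; the argument stands only once you replace the tail route by the partial-sum lemma \eqref{eq:4-12}.
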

\begin{thm}
\label{t1-5} For any nonnegative integer $m$, we have
\begin{equation}
\begin{aligned} & \frac{1}{(q^{2};q^{2})_{\infty}(q;q)_{\infty}}\sum_{n=0}^{m}(-1)^{n}(1-q^{2n+2})q^{n^{2}+n}\sum_{j=0}^{n}q^{-j(j+1)/2}\\
 & =1+(-1)^{m}q^{m(m+1)/2}\sum_{k=m+1}^{\infty}\sum_{j=0}^{k}\frac{(-q^{-m-3/2};q)_{j}(q^{1/2};q)_{k-j}}{(q;q)_{j}(q;q)_{k-j}}q^{(m+2)j+k}{k-1 \brack m}_{q}.
\end{aligned}
\label{eq:11-8}
\end{equation}
\end{thm}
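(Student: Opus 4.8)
The plan is to follow the route used for Theorem~\ref{t1-1}, exploiting that \eqref{eq:11-2} is the $m\to\infty$ case of \eqref{eq:11-8}. Write $c_n=(-1)^n(1-q^{2n+2})q^{n^2+n}\sum_{j=0}^n q^{-j(j+1)/2}$ for the summand of \eqref{eq:11-2}, so that $\sum_{n=0}^\infty c_n=(q;q)_\infty(q^2;q^2)_\infty$. Dividing by $(q;q)_\infty(q^2;q^2)_\infty$ and splitting the sum at $n=m$ gives
\[
\frac{1}{(q;q)_\infty(q^2;q^2)_\infty}\sum_{n=0}^m c_n=1-\frac{1}{(q;q)_\infty(q^2;q^2)_\infty}\sum_{n=m+1}^\infty c_n,
\]
so \eqref{eq:11-8} reduces to identifying its right-hand double sum with the normalized tail $-\frac{1}{(q;q)_\infty(q^2;q^2)_\infty}\sum_{n>m}c_n$. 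A quick check of lowest-order terms confirms that both candidates begin with $(-1)^m q^{(m+1)(m+2)/2}$.

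The engine is the expansion formula of Liu~\cite{L13}, applied to re-expand the reciprocal product against the partial theta sum $\sum_{j=0}^n q^{-j(j+1)/2}$; this is the step that manufactures the half-integer base shifts and so produces the symbols $(q^{1/2};q)_{k-j}$ and $(-q^{-m-3/2};q)_j$. The computation runs parallel to the proof of \eqref{eq:11-9}: the extra factor $(1-q^{2n+2})$ and the one-sided inner range $0\le j\le n$ (rather than $-n\le j\le n$) shift the parameters from $(-q^{-m-1/2},\,q^{(m+1)i})$ to $(-q^{-m-3/2},\,q^{(m+2)j})$, exactly the change one sees on comparing \eqref{eq:11-9} with \eqref{eq:11-8}.

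To organize the resulting simplification I would read the inner $j$-sum as a Cauchy product. With $A(z)=\sum_{j\ge0}\frac{(-q^{-m-3/2};q)_j}{(q;q)_j}q^{(m+2)j}z^j$ and $B(z)=\sum_{i\ge0}\frac{(q^{1/2};q)_i}{(q;q)_i}z^i$, the $q$-binomial theorem gives $A(z)=\frac{(-q^{1/2}z;q)_\infty}{(q^{m+2}z;q)_\infty}$ and $B(z)=\frac{(q^{1/2}z;q)_\infty}{(z;q)_\infty}$, so the inner sum is $[z^k]A(z)B(z)$. The identity $(q^{1/2}z;q)_\infty(-q^{1/2}z;q)_\infty=(qz^2;q^2)_\infty$ then collapses the product to $\frac{(qz^2;q^2)_\infty}{(z;q)_\infty(q^{m+2}z;q)_\infty}$, and summing $q^k{k-1\brack m}_q[z^k]$ against this generating function is what should reproduce the normalized tail.

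The main obstacle will be the bookkeeping rather than any single hard estimate: one must carry the prefactor $(-1)^m q^{m(m+1)/2}$, justify the index shift that introduces ${k-1\brack m}_q$ together with the range $k\ge m+1$, and legitimately interchange the triple summation (absolutely convergent for $|q|<1$) so that the Cauchy-product collapse can be performed. A useful consistency feature to exploit is that the fractional powers carried by the half-integer Pochhammer symbols must cancel in aggregate, since both sides of \eqref{eq:11-8} are power series in $q$ with integer exponents. Once the generating-function identity is checked against the tail of \eqref{eq:11-2}, the theorem follows.
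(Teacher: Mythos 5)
There is a genuine gap: your proposal never produces the one ingredient that actually drives the paper's proof, namely the finite-$m$ transformation \eqref{eq:33-4} of Lemma \ref{l4-1},
\[
(-1)^{m}q^{m(m+1)/2}\sum_{n=0}^{m}\frac{(q^{-m};q)_{n}(q^{m+1};q)_{n+2}}{(q;q^{2})_{n+1}}
=\sum_{n=0}^{m}(-1)^{n}(1-q^{2n+2})q^{n^{2}+n}\sum_{j=0}^{n}q^{-j(j+1)/2},
\]
which rewrites the truncated sum as a single balanced $q$-hypergeometric sum. You invoke ``the expansion formula of Liu'' as the engine, but Liu's \eqref{eq:22-7} enters only in proving \eqref{eq:33-4} (together with \eqref{eq:1-2}, \eqref{eq:7-1} and the resulting evaluation \eqref{eq:33-5}); it is not applied to ``re-expand the reciprocal product against the partial theta sum,'' and no such application is exhibited. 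Moreover, your account of where the half-integer symbols come from is off: in the paper they arise \emph{after} \eqref{eq:33-4}, from factoring $(q^{2n+3};q^{2})_{\infty}=(q^{n+3/2};q)_{\infty}(-q^{n+3/2};q)_{\infty}$ and applying the $q$-binomial theorem \eqref{eq:22-2} to the two ratios $\frac{(q^{n+3/2};q)_{\infty}}{(q^{n+1};q)_{\infty}}$ and $\frac{(-q^{n+3/2};q)_{\infty}}{(q^{m+n+3};q)_{\infty}}$. Crucially, the binomial ${k-1\brack m}_{q}$ and the truncation $k\geq m+1$ are not ``bookkeeping'': they come from evaluating $\sum_{n=0}^{m}\frac{(q^{-m};q)_{n}}{(q;q)_{n}}q^{n(i+j)}=(q^{i+j-m};q)_{m}$ via \eqref{eq:22-3}, which vanishes for $0<i+j\leq m$ and yields ${k-1\brack m}_{q}$ after setting $k=i+j$. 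Without the $n$-sum of \eqref{eq:33-4} there is nothing for \eqref{eq:22-3} to act on, and your plan contains no substitute mechanism.

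Your reduction to the tail identity via \eqref{eq:11-2} is legitimate, and your Cauchy-product observation is correct and tidy: the inner sum of \eqref{eq:11-8} is indeed $q^{k}[z^{k}]$ of $\frac{(qz^{2};q^{2})_{\infty}}{(z;q)_{\infty}(q^{m+2}z;q)_{\infty}}$ (this is the paper's double use of \eqref{eq:22-2} read in reverse). But this only repackages the right-hand side; the proposal then stalls exactly at the claim that ``summing $q^{k}{k-1\brack m}_{q}[z^{k}]$ against this generating function is what should reproduce the normalized tail.'' That is the whole theorem restated, and no method is offered for evaluating $\sum_{k>m}q^{k}{k-1\brack m}_{q}[z^{k}]F(z)$ or matching it with $-\frac{1}{(q;q)_{\infty}(q^{2};q^{2})_{\infty}}\sum_{n>m}c_{n}$ --- a step with no standard closed form, and one the paper deliberately avoids by transforming the partial sum forward rather than the tail backward. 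To repair the argument you would need to state and prove \eqref{eq:33-4} (or an equivalent finite identity), at which point you would essentially be reproducing the paper's proof; the leading-order consistency check you perform, while correct, confirms only the first coefficient.
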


We remark that the identities \eqref{eq:11-6} and \eqref{eq:11-2}
are equivalent, but their truncated versions, namely, \eqref{eq:11-9}
and \eqref{eq:11-8}, are different.

Cubic partitions are partition pairs $(\lambda,\mu),$ where $\lambda$
is an ordinary partition and $\mu$ is a partition into even parts
only. H.-C. Chan introduced and studied this partition function in
\cite{C1,C2}. Let $pp_{e}(n)$ denote the number of these partitions
and let $pp_{e}(0)=1$. Then the reciprocal of the infinite product
in \eqref{eq:11-1} is the generating function for $pp_{e}(n).$ Taking
the truncated series on the right-hand side of \eqref{eq:11-1} we
deduce the following theorem.
\begin{thm}
\label{t1-2} For any nonnegative integer $m$, we have
\begin{equation}
\begin{aligned} & \frac{1}{(q^{2};q^{2})_{\infty}(q;q)_{\infty}}\sum_{n=0}^{m}(1-q^{2n+1})q^{2n^{2}+n}\sum_{j=-n}^{n}(-1)^{j}q^{-j^{2}}\\
 & =1+(-1)^{m}q^{m(m+1)}\sum_{k=m+1}^{\infty}\sum_{i=0}^{k}\frac{(-q^{-2m-1};q^{2})_{i}(-1;q^{2})_{k-i}}{(q^{2};q^{2})_{i}(q^{2};q^{2})_{k-i}}q^{2(m+1)i+2k}{k-1 \brack m}_{q^{2}}.
\end{aligned}
\label{eq:11-7}
\end{equation}
\end{thm}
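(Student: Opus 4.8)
The plan is to obtain \eqref{eq:11-7} by peeling off the tail of the Hecke-type series \eqref{eq:11-1} and then recasting that tail in closed form. Dividing both sides of \eqref{eq:11-1} by $(q;q)_{\infty}(q^{2};q^{2})_{\infty}$ and grouping the summand as $(1-q^{2n+1})q^{2n^{2}+n}\sum_{j=-n}^{n}(-1)^{j}q^{-j^{2}}$ gives
\[
\frac{1}{(q^{2};q^{2})_{\infty}(q;q)_{\infty}}\sum_{n=0}^{\infty}(1-q^{2n+1})q^{2n^{2}+n}\sum_{j=-n}^{n}(-1)^{j}q^{-j^{2}}=1.
\]
For $|q|<1$ this series converges absolutely, so I may split the outer sum at $n=m$. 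Subtracting the truncated part, the left-hand side of \eqref{eq:11-7} equals $1$ minus the normalized tail, and hence \eqref{eq:11-7} is \emph{equivalent} to the tail evaluation
\[
\frac{1}{(q^{2};q^{2})_{\infty}(q;q)_{\infty}}\sum_{n=m+1}^{\infty}(1-q^{2n+1})q^{2n^{2}+n}\sum_{j=-n}^{n}(-1)^{j}q^{-j^{2}}=-(-1)^{m}q^{m(m+1)}\sum_{k=m+1}^{\infty}\sum_{i=0}^{k}\frac{(-q^{-2m-1};q^{2})_{i}(-1;q^{2})_{k-i}}{(q^{2};q^{2})_{i}(q^{2};q^{2})_{k-i}}q^{2(m+1)i+2k}{k-1 \brack m}_{q^{2}}.
\]
Everything now reduces to expressing this normalized tail as the stated double sum.

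The engine for this step is the transformation formula of Liu \cite{L13}, the same formula underlying \eqref{eq:1-13} and \eqref{eq:1-8}. I would first write the product in base $q^{2}$ as $(q;q)_{\infty}(q^{2};q^{2})_{\infty}=(q;q^{2})_{\infty}(q^{2};q^{2})_{\infty}^{2}$, so that its reciprocal is the natural object to expand, and then apply Liu's formula to merge this reciprocal with the inner partial theta sum $\sum_{j=-n}^{n}(-1)^{j}q^{-j^{2}}$ and the prefactor $(1-q^{2n+1})q^{2n^{2}+n}$. The appearance of the global factor $q^{m(m+1)}$ is already anticipated by the boundary terms $j=\pm m$ of the $n=m$ summand, since $q^{2m^{2}+m}\cdot q^{-m^{2}}=q^{m(m+1)}$; this is the contribution that survives when the truncation point is exposed, and it is pulled out in front. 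The parameter $q^{-2m-1}$ in $(-q^{-2m-1};q^{2})_{i}$ is then exactly the shift by $m$ recorded relative to that boundary term, the two $q$-Pochhammer factors $(-q^{-2m-1};q^{2})_{i}$ and $(-1;q^{2})_{k-i}$ arise as the two $q$-binomial-theorem series produced by Liu's formula, and the Gaussian binomial ${k-1 \brack m}_{q^{2}}$ bundles the two convolution indices through $k=i+(k-i)$.

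Concretely, the inner double sum should collapse to a Cauchy product: up to the weight ${k-1 \brack m}_{q^{2}}$ and the powers of $q^{2}$, it is coefficient extraction from the product of
\[
\sum_{i\geq0}\frac{(-q^{-2m-1};q^{2})_{i}}{(q^{2};q^{2})_{i}}w^{i}=\frac{(-q^{-2m-1}w;q^{2})_{\infty}}{(w;q^{2})_{\infty}}\qquad\text{and}\qquad\sum_{l\geq0}\frac{(-1;q^{2})_{l}}{(q^{2};q^{2})_{l}}z^{l}=\frac{(-z;q^{2})_{\infty}}{(z;q^{2})_{\infty}},
\]
evaluated at the appropriate powers of $q^{2}$, after which the Gaussian binomial is inserted via the standard generating function $\sum_{k}{k-1 \brack m}_{q^{2}}x^{k}$. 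The main obstacle, and the step I would treat most carefully, is the interchange of summations together with the reindexing $n\mapsto k$ (with $k\geq m+1$) that turns the single tail over $n$ into the double sum over $(k,i)$: one has to track the signs and the exact exponents of $q$ through the shift by $m$, justify the rearrangement by absolute convergence for $|q|<1$, and --- most delicately --- verify that the partial theta sum $\sum_{j=-n}^{n}(-1)^{j}q^{-j^{2}}$ maps onto the Gaussian binomial ${k-1 \brack m}_{q^{2}}$ rather than a neighboring expression. Once this correspondence is confirmed, the remaining manipulations are routine applications of the $q$-binomial theorem, and the proof of \eqref{eq:11-7} follows. The same scheme, with \eqref{eq:11-1} replaced by \eqref{eq:11-6} and \eqref{eq:11-2}, should yield Theorems \ref{t1-1} and \ref{t1-5}.
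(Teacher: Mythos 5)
Your opening reduction is valid as far as it goes: dividing \eqref{eq:11-1} by $(q;q)_{\infty}(q^{2};q^{2})_{\infty}$ and splitting at $n=m$ does show that \eqref{eq:11-7} is equivalent to a closed form for the normalized tail. But the proposal stops exactly where the real work begins, and the engine you name cannot be applied in the way you describe. Liu's transformation \eqref{eq:22-7} is a \emph{terminating} transformation: it rewrites sums $\sum_{n=0}^{m}$, not tails $\sum_{n=m+1}^{\infty}$, so it cannot ``merge'' the infinite tail with the reciprocal product. What it actually delivers---and what your sketch is missing---is the paper's key Lemma \ref{l4-1}, equation \eqref{eq:33-2}:
\[
(-1)^{m}q^{m(m+1)}\sum_{n=0}^{m}\frac{(q^{-2m};q^{2})_{n}(q^{2m+2};q^{2})_{n+1}}{(-q^{2};q^{2})_{n}(-q;q^{2})_{n+1}}=\sum_{n=0}^{m}(1-q^{2n+1})q^{2n^{2}+n}\sum_{j=-n}^{n}(-1)^{j}q^{-j^{2}},
\]
obtained from \eqref{eq:22-7} with $q\mapsto q^{2}$, $\alpha=\beta=q^{2}$, $c=-q^{2}$, $d=-q^{3}$ and $a\to0$, combined with Andrews' evaluation \eqref{eq:33-1} of the inner ${}_{3}\phi_{2}$ as precisely the partial theta sum $\sum_{j=-n}^{n}(-1)^{j}q^{-j^{2}}$. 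Without \eqref{eq:33-1} (which you never identify) the partial theta sum cannot enter Liu's formula at all, and without \eqref{eq:33-2} your ``tail evaluation'' is a restatement of the theorem rather than a proof of it.

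Note also that the paper never invokes \eqref{eq:11-1} in this proof; the logic runs the other way (the $m\to\infty$ limit of the same machinery is how \eqref{eq:11-1} is reproved in Section \ref{sec:3}), and the constant $1$ arises internally rather than by subtraction from the full series. Concretely, starting from \eqref{eq:33-2} one rewrites the summand as $\frac{(q^{-2m};q^{2})_{n}}{(q^{2};q^{2})_{n}}\cdot\frac{(-q^{2n+3};q^{2})_{\infty}(-q^{2n+2};q^{2})_{\infty}}{(q^{2m+2n+4};q^{2})_{\infty}(q^{2n+2};q^{2})_{\infty}}$, expands the two quotients by the $q$-binomial theorem \eqref{eq:22-2}---here your guess of the two series with parameters $-q^{-2m-1}$ and $-1$ in base $q^{2}$ is correct---and then performs the \emph{finite} $n$-sum by \eqref{eq:22-3}, producing the factor $(q^{2(i+j-m)};q^{2})_{m}$. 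This factor, not a generating-function insertion, is the source of everything you flagged as unverified: with $k=i+j$ it vanishes for $0<k\leq m$, its $k=0$ value $(q^{-2m};q^{2})_{m}=(-1)^{m}q^{-m(m+1)}(q^{2};q^{2})_{m}$ produces the leading $1$, and for $k\geq m+1$ it equals $(q^{2};q^{2})_{k-1}/(q^{2};q^{2})_{k-m-1}$, which after division by $(q^{2};q^{2})_{m}$ is exactly ${k-1 \brack m}_{q^{2}}$. Likewise your heuristic for the prefactor $q^{m(m+1)}$ (boundary terms $j=\pm m$ of the $n=m$ summand) is off target: it is simply the prefactor already present in \eqref{eq:33-2}. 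In short, several ingredients are correctly guessed, but the decisive step---the finite truncation identity \eqref{eq:33-2} and the ${}_{3}\phi_{2}$ evaluation \eqref{eq:33-1} behind it---is absent, so the argument as proposed does not go through.
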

From Theorem \ref{t1-2} we can derive a family of inequality for
the partition function $pp_{e}(n)$ immediately.
\begin{thm}
\label{tt1} For nonnegative integers $m$ and $N$, we have
\[
(-1)^{m}\sum_{n=0}^{m}\sum_{j=-n}^{n}(-1)^{j}(pp_{e}(N+j^{2}-2n^{2}-n)-pp_{e}(N+j^{2}-2n^{2}-3n-1))\geq0.
\]
\end{thm}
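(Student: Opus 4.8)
The plan is to read the inequality off the identity \eqref{eq:11-7} of Theorem \ref{t1-2} by comparing coefficients of $q^N$. Since $pp_e(n)$ is generated by the reciprocal of the infinite product in \eqref{eq:11-1}, namely
\[
\sum_{n=0}^{\infty}pp_e(n)q^n=\frac{1}{(q;q)_\infty(q^2;q^2)_\infty},
\]
I would first expand the double sum on the left-hand side of \eqref{eq:11-7} as
\[
\sum_{n=0}^{m}\sum_{j=-n}^{n}(-1)^{j}\bigl(q^{2n^2+n-j^2}-q^{2n^2+3n+1-j^2}\bigr),
\]
multiply by the generating series above, and extract the coefficient of $q^N$. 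With the convention $pp_e(k)=0$ for $k<0$, this coefficient is precisely
\[
\sum_{n=0}^{m}\sum_{j=-n}^{n}(-1)^{j}\bigl(pp_e(N+j^2-2n^2-n)-pp_e(N+j^2-2n^2-3n-1)\bigr),
\]
which is the double sum appearing in the theorem.

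By \eqref{eq:11-7} this coefficient equals the coefficient of $q^N$ in the right-hand side of that identity. For $N\geq1$ the constant term $1$ is irrelevant, so multiplying by $(-1)^m$ reduces the claim to showing that the error series
\[
q^{m(m+1)}\sum_{k=m+1}^{\infty}\sum_{i=0}^{k}\frac{(-q^{-2m-1};q^{2})_{i}(-1;q^{2})_{k-i}}{(q^{2};q^{2})_{i}(q^{2};q^{2})_{k-i}}q^{2(m+1)i+2k}{k-1 \brack m}_{q^{2}}
\]
has nonnegative power-series coefficients (the $N=0$ value being read off directly from the $n=0$, $j=0$ term). Every factor here except $(-q^{-2m-1};q^2)_i$ is patently a series in $q$ with nonnegative coefficients: $1/(q^2;q^2)_i$ and $1/(q^2;q^2)_{k-i}$ are partition generating functions, $(-1;q^2)_{k-i}=\prod_{l=0}^{k-i-1}(1+q^{2l})$ and the Gaussian binomial ${k-1 \brack m}_{q^2}$ are polynomials with nonnegative coefficients, and $q^{m(m+1)+2(m+1)i+2k}$ is a monomial.

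The crux is therefore to control $(-q^{-2m-1};q^2)_i$, which carries negative powers of $q$. I would reverse it by pulling the smallest power out of each binomial factor, obtaining
\[
(-q^{-2m-1};q^2)_i=q^{\,i^2-2i(m+1)}(-q^{2m-2i+3};q^2)_i,
\]
so that $q^{2(m+1)i}(-q^{-2m-1};q^2)_i=q^{i^2}(-q^{2m-2i+3};q^2)_i$. Now $(-q^{2m-2i+3};q^2)_i=\prod_{l=0}^{i-1}(1+q^{2m-2i+3+2l})$ is a Laurent polynomial with nonnegative coefficients whose lowest exponent is $-(i-m-1)^2$ when $i\geq m+2$ and is nonnegative otherwise; hence the shifted factor $q^{i^2}(-q^{2m-2i+3};q^2)_i$ has nonnegative lowest exponent (equal to $(m+1)(2i-m-1)$ when $i\geq m+2$) and is a genuine polynomial in $q$ with nonnegative coefficients. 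Each summand of the error series is then a product of series with nonnegative coefficients, so the whole series has nonnegative coefficients; convergence is $q$-adic because the factor $q^{2k}$ lets only finitely many $(i,k)$ contribute to any fixed power of $q$. I expect this degree bookkeeping---checking that the reversal exactly absorbs the negative powers introduced by $(-q^{-2m-1};q^2)_i$, so that every term becomes a nonnegative-coefficient polynomial---to be the main obstacle; once it is settled the inequality follows at once.
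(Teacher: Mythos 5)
Your reduction and nonnegativity analysis are correct, and they constitute precisely the derivation the paper omits (it declares the passage from Theorem \ref{t1-2} to Theorem \ref{tt1} ``quite easy'' and gives no details). Extracting the coefficient of $q^{N}$ from both sides of \eqref{eq:11-7} gives exactly the double sum in the theorem, and the one nontrivial point --- that the tail $q^{m(m+1)}\sum_{k=m+1}^{\infty}\sum_{i=0}^{k}\cdots$ has nonnegative coefficients --- is settled by your reversal $(-q^{-2m-1};q^{2})_{i}=q^{i^{2}-2i(m+1)}(-q^{2m-2i+3};q^{2})_{i}$, whose bookkeeping checks out: the lowest exponent of $(-q^{2m-2i+3};q^{2})_{i}$ is $-(i-m-1)^{2}$ for $i\geq m+2$ and $0$ for $i\leq m+1$, so $q^{i^{2}}(-q^{2m-2i+3};q^{2})_{i}$ is a genuine polynomial in $q$ with nonnegative coefficients, every remaining factor ($1/(q^{2};q^{2})_{i}$, $1/(q^{2};q^{2})_{k-i}$, $(-1;q^{2})_{k-i}$, ${k-1 \brack m}_{q^{2}}$, $q^{2k}$) manifestly has nonnegative coefficients, and the factor $q^{2k}$ makes the termwise comparison legitimate since only finitely many $(i,k)$ contribute to any fixed power of $q$.

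The one flaw is your parenthetical disposal of $N=0$, which is not a proof but in fact exposes a defect --- in the statement, not in your method. At $N=0$ only the $n=j=0$ term survives (for $n\geq1$ one has $j^{2}\leq n^{2}<2n^{2}+n$, and $j^{2}<2n^{2}+3n+1$ always), so the double sum equals $pp_{e}(0)=1$ and the asserted inequality reads $(-1)^{m}\geq0$, which is false for odd $m$; equivalently, the constant term $1$ on the right-hand side of \eqref{eq:11-7} is irrelevant only when $N\geq1$. Your argument therefore proves the theorem for all $N\geq1$ (and for $N=0$ with even $m$), which is the best possible; you should state this restriction explicitly rather than suggest the $N=0$ value can be ``read off'' in support of the claim --- reading it off refutes the claim for odd $m$. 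This edge case is inherited from the paper, which asserts the inequality for all nonnegative $N$ and omits the derivation; the same caveat applies to Theorems \ref{tt2} and \ref{tt3}.
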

Similarly, taking the truncated series on the right-hand sides of
\eqref{eq:1-13} and \eqref{eq:1-8}, we obtain the following results.
\begin{thm}
\label{t1-6} For any nonnegative integer $m$, we have
\begin{equation}
\begin{aligned} & \frac{1}{(q;q)_{\infty}^{2}}\sum_{n=0}^{m}(1-q^{2n+2})q{}^{2n^{2}+3n+1}\sum_{j=-n-1}^{n}(-1)^{j+1}q^{-j(3j+1)/2}\\
 & =1+(-1)^{m}q^{m(m+1)/2}\sum_{k=m+1}^{\infty}\sum_{i=0}^{k}\frac{q^{(m+2)i+k}}{(q;q)_{i}(q;q)_{k-i}}{k-1 \brack m}_{q}.
\end{aligned}
\label{eq:12-1}
\end{equation}
\end{thm}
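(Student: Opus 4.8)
The plan is to derive \eqref{eq:12-1} from the full identity in Theorem \ref{t1-3} together with Liu's expansion formula \cite{L13}. Write $P:=(q;q)_{\infty}^{2}$ and
\[
f_{n}:=(1-q^{2n+2})\,q^{2n^{2}+3n+1}\sum_{j=-n-1}^{n}(-1)^{j+1}q^{-j(3j+1)/2},
\]
so that Theorem \ref{t1-3} reads $\sum_{n=0}^{\infty}f_{n}=P$. Dividing by $P$ gives
\[
\frac{1}{P}\sum_{n=0}^{m}f_{n}=1-\frac{1}{P}\sum_{n=m+1}^{\infty}f_{n},
\]
so the theorem is equivalent to identifying the normalized tail $-\frac{1}{P}\sum_{n>m}f_{n}$ with the double series on the right of \eqref{eq:12-1}. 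This reduces the whole statement to one computation: evaluating the tail after multiplication by $1/(q;q)_{\infty}^{2}$.

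The engine is Liu's formula \cite{L13}. First I would insert the closed shape of the inner (two-sided, asymmetric) finite theta sum and reorganize $f_{n}$ so that the factor $1-q^{2n+2}$ is absorbed; this factor is what makes the tail amenable to Liu's transformation, playing the same recombining role that $1-q^{2n+1}$ plays in the Andrews--Merca setting. Next I would apply Liu's expansion to $\frac{1}{(q;q)_{\infty}^{2}}\sum_{n>m}f_{n}$, which converts the reciprocal infinite product into a series in a new index $k$ and produces the Gaussian binomial ${k-1 \brack m}_{q}$; since this coefficient vanishes unless $k\ge m+1$, it simultaneously records the truncation level $m$ and fixes the range $k\ge m+1$, while the surviving $q$-powers assemble into the prefactor $(-1)^{m}q^{m(m+1)/2}$. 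Finally I would interchange the order of summation and apply the Cauchy identity to collapse the remaining reciprocal Pochhammer products into the inner convolution $\sum_{i=0}^{k}\frac{q^{(m+2)i+k}}{(q;q)_{i}(q;q)_{k-i}}$, arriving at \eqref{eq:12-1}.

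A useful cross-check is the comparison with Theorems \ref{t1-1}, \ref{t1-5} and \ref{t1-2}, whose remainders carry extra numerator Pochhammer factors such as $(-q^{-m-1/2};q)_{i}$ and $(q^{1/2};q)_{k-i}$, whereas the numerator here is simply $1$. I would confirm that, for the particular quadratic form $2n^{2}+3n+1-j(3j+1)/2$ of Theorem \ref{t1-3}, the parameters fed into Liu's formula specialize so that those numerator symbols degenerate to $1$; this is precisely the mechanism that makes \eqref{eq:12-1} the simplest of the four truncations.

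The main obstacle will be the bookkeeping in the middle step. Because $|q|<1$ every series here is absolutely convergent, so rearrangement is harmless; the real difficulty is matching exponents exactly. One must carry out Liu's transformation on the asymmetric $j$-range $-n-1\le j\le n$, perform the index shift $n\mapsto n-m-1$, and track the powers coming from $1-q^{2n+2}$, verifying that they combine into exactly $q^{m(m+1)/2}$ together with the exponent $(m+2)i+k$ in the inner sum. This matching is the delicate part and is where the detailed calculation must be pushed through.
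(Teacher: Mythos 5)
Your reduction of \eqref{eq:12-1} to evaluating the normalized tail $-\frac{1}{P}\sum_{n>m}f_{n}$ is sound as far as it goes, but the core step of your plan --- ``apply Liu's expansion to $\frac{1}{(q;q)_{\infty}^{2}}\sum_{n>m}f_{n}$'' --- does not exist as a tool. Liu's formula \eqref{eq:22-6}, and its specialization \eqref{eq:22-7}, are transformations for \emph{terminating} series: the factor $(q^{-m};q)_{n}$ forces the sum to stop at $n=m$, so the formula applies to the partial sum $\sum_{n=0}^{m}$ and cannot be fed an infinite tail, nor is there any ``index shift $n\mapsto n-m-1$'' step available in this machinery. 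You have also misattributed the origin of the Gaussian binomial: ${k-1 \brack m}_{q}$ does not come out of Liu's transformation at all. In the paper it arises much later, from the finite $q$-binomial theorem \eqref{eq:22-3}, which evaluates an inner sum over $n$ as $(q^{i+j-m};q)_{m}$; after the substitution $k=i+j$ this equals $(q;q)_{m}{k-1 \brack m}_{q}$ up to normalization, and its vanishing for $0<k\leq m$ is what truncates the $k$-range, while the $k=0$ term produces the leading $1$ via $(q^{-m};q)_{m}=(-1)^{m}q^{-m(m+1)/2}(q;q)_{m}$. Likewise the prefactor $(-1)^{m}q^{m(m+1)/2}$ is not ``assembled from surviving $q$-powers'' of a tail computation; it is present from the outset.

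The missing idea, concretely, is the closed-form evaluation of the \emph{partial sum} --- Lemma \ref{l2-1}, identity \eqref{eq:1-12} --- which states that $\sum_{n=0}^{m}f_{n}=(-1)^{m}q^{m(m+1)/2}\sum_{n=0}^{m}(q^{-m};q)_{n}(q^{m+1};q)_{n+2}$. This is where Liu's formula actually enters: one sets $\alpha=q^{2},\beta=q,c=d=0$ in \eqref{eq:22-7} and lets $a\rightarrow0$, having first evaluated the inner $_{3}\phi_{2}$ via \eqref{eq:1-4} and the Liu identity \eqref{eq:1-11}. With \eqref{eq:1-12} in hand, the paper never touches the tail: it divides by $(q;q)_{\infty}^{2}$, rewrites each summand so that the Pochhammer symbols become $1/\bigl((q^{m+n+3};q)_{\infty}(q^{n+1};q)_{\infty}\bigr)$, expands both reciprocals by Euler's identity \eqref{eq:12-2} (note: not the $q$-binomial theorem \eqref{eq:22-2} with extra numerator symbols, which is why, as you correctly observed, the numerator here is simply $1$ --- but this is visible immediately from the parameters, not something needing verification through a tail analysis), interchanges summation, and applies \eqref{eq:22-3} as described above. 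If you want to salvage your tail decomposition, the only route through the cited toolkit is to compute the tail as $P$ minus the closed form \eqref{eq:1-12} --- which is the paper's argument in disguise and still requires proving the lemma you skipped. As written, your proposal defers the entire content of the theorem to a ``delicate matching of exponents'' inside a transformation that cannot be applied to the object you propose to apply it to.
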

\begin{thm}
\label{t1-7} For any nonnegative integer $m$, we have
\begin{equation}
\begin{aligned} & \frac{(-q;q^{2})_{\infty}}{(q^{2};q^{2})_{\infty}^{2}}\sum_{n=0}^{m}(1-q^{4n+4})q^{3n^{2}+4n+1}\sum_{j=-n-1}^{n}(-1)^{j+1}q^{-j(2j+1)}\\
 & =1+(-1)^{m}q^{m(m+1)}\sum_{k=m+1}^{\infty}\sum_{j=0}^{k}\frac{q^{(2m+2)j+2k}(-q;q^{2})_{k-j}}{(q^{2};q^{2})_{j}(q^{2};q^{2})_{k-j}}{k-1 \brack m}_{q^{2}}.
\end{aligned}
\label{eq:12-3}
\end{equation}
\end{thm}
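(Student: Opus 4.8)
The plan is to read \eqref{eq:12-3} as the truncation of the companion of \eqref{eq:1-8} obtained under $q\mapsto-q$, and to reduce it to a tail identity that the expansion of Liu \cite{L13} can produce. First I would record the companion full identity. Replacing $q$ by $-q$ in Theorem~\ref{t1-4} and using that $(q^2;q^2)_\infty$ is invariant while $(q;q^2)_\infty\mapsto(-q;q^2)_\infty$, a one-line parity check on the exponent $3n^2+4n+1-j(2j+1)$ converts the sign $(-1)^n$ into $(-1)^{j+1}$ and leaves $1-q^{4n+4}$ unchanged, giving
\[
\frac{(q^2;q^2)_\infty^2}{(-q;q^2)_\infty}=\sum_{n=0}^\infty(1-q^{4n+4})q^{3n^2+4n+1}\sum_{j=-n-1}^n(-1)^{j+1}q^{-j(2j+1)}.
\]
Writing $B_n$ for the $n$-th summand on the right and $G=\frac{(-q;q^2)_\infty}{(q^2;q^2)_\infty^2}$, this says $G\sum_{n\geq0}B_n=1$. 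Subtracting the partial sum from the full sum, \eqref{eq:12-3} becomes equivalent to the tail identity
\[
G\sum_{n=m+1}^\infty B_n=(-1)^{m+1}q^{m(m+1)}\sum_{k=m+1}^\infty\sum_{j=0}^k\frac{q^{(2m+2)j+2k}(-q;q^2)_{k-j}}{(q^2;q^2)_j(q^2;q^2)_{k-j}}{k-1 \brack m}_{q^2}.
\]
As a consistency check, letting $m\to\infty$ sends both sides to $0$ against the background $G\sum_{n\geq0}B_n=1$, recovering the companion identity above.

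Next I would treat this tail identity by the same strategy as the companion Theorem~\ref{t1-6}, but with $q$ replaced by $q^2$ throughout and with the extra factor $(-q;q^2)_\infty$ retained in the prefactor. Concretely, I would expand $G$ using Euler's identity $\frac{1}{(q^2;q^2)_\infty}=\sum_{s\geq0}\frac{q^{2s}}{(q^2;q^2)_s}$ together with a $q$-binomial expansion of the remaining $\frac{(-q;q^2)_\infty}{(q^2;q^2)_\infty}$; the latter is the source of the numerator $(-q;q^2)_{k-j}$ that distinguishes \eqref{eq:12-3} from \eqref{eq:12-1}. Substituting these expansions together with the explicit form of $B_n$ into the left side, interchanging the order of summation (legitimate since $|q|<1$ and every series converges absolutely, so Fubini applies), and collecting the powers of $q$ coming from $B_n$ and from the two expansion factors, the outer index $n\geq m+1$ should merge with the expansion indices into a single index $k$, while the truncation level $m$ gets absorbed into the Gaussian coefficient ${k-1 \brack m}_{q^2}$. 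This is precisely the mechanism by which the cutoff $n\geq m+1$ in the Hecke series reappears as the weight ${k-1 \brack m}_{q^2}$ on the right, and it is what Liu's formula \cite{L13}, applied with base $q^2$, performs in a single reindexing step.

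The main obstacle is the careful propagation of the extra factor $(-q;q^2)_\infty$, which is the only genuine difference between \eqref{eq:12-3} and the base-$q$ case \eqref{eq:12-1}. One must verify that after the interchange of summations this factor reassembles as $(-q;q^2)_{k-j}$ inside the inner sum rather than as a spurious global product, and, relatedly, that the $q$-binomial expansion introduces the correct matching powers of $q$ (the bookkeeping here involves small corrective factors such as $1+q$ that must cancel). A second delicate point is that the quadratic exponent $3n^2+4n+1$, rather than the $2n^2+3n+1$ of Theorem~\ref{t1-6}, must feed exactly the prefactor $q^{m(m+1)}$ and the inner weight $q^{(2m+2)j+2k}$ into the result; confirming that these powers line up after the reindexing is the heart of the computation. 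Once the algebraic identity underlying Liu's expansion is set up with these parameters, the remaining steps are routine manipulations of $q$-shifted factorials and the interchange of the (absolutely convergent) double sums.
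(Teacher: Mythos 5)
Your opening moves are fine: the parity check on the exponent $3n^{2}+4n+1-j(2j+1)$ under $q\mapsto-q$ is correct, and indeed the paper itself obtains \eqref{eq:12-3} by performing the substitution $q\mapsto-q$ at the very last step of an argument carried out for the un-negated series. But there is a genuine gap at the heart of your plan: you never produce the finite identity that converts the truncated Hecke-type double sum into something the $q$-binomial machinery can process. The paper's proof rests on Lemma \ref{l2-1}, equation \eqref{eq:1-7},
\begin{equation*}
(-1)^{m}q^{m(m+1)}\sum_{n=0}^{m}\frac{(q^{-2m};q^{2})_{n}(q^{2m+2};q^{2})_{n+2}}{(q;q^{2})_{n+1}}
=\sum_{n=0}^{m}(-1)^{n}(1-q^{4n+4})q^{3n^{2}+4n+1}\sum_{j=-n-1}^{n}q^{-j(2j+1)},
\end{equation*}
and this is where Liu's transformation \eqref{eq:22-7} actually enters: it is applied to the \emph{partial} sum $\sum_{n=0}^{m}$, where the terminating parameter $q^{-2m}$ lives, together with the evaluation \eqref{eq:1-6}. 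After this conversion each summand carries the $n$-dependent quotient $(q^{2n+3};q^{2})_{\infty}/\bigl((q^{2n+2};q^{2})_{\infty}(q^{2m+2n+4};q^{2})_{\infty}\bigr)$, whose expansion via \eqref{eq:22-2} and \eqref{eq:12-2} produces geometric factors $q^{(2n+2)i}$ and $q^{(2m+2n+4)j}$ in $n$; only then can the finite $n$-sum be evaluated in closed form by \eqref{eq:22-3} as $(q^{2(i+j-m)};q^{2})_{m}$, whose vanishing for $0<i+j\leq m$ creates the constant term $1$ and whose surviving values supply ${k-1 \brack m}_{q^{2}}$.

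Your plan inverts this order and thereby loses the mechanism. Reducing to the tail $\sum_{n>m}B_{n}$ and expanding the $n$-independent global prefactor $G$ fails on two counts. First, Liu's \eqref{eq:22-6}--\eqref{eq:22-7} is a transformation of terminating sums $\sum_{n=0}^{m}$ and has no version for tails, so your reduction discards exactly the terminating structure the tool requires. Second, expanding $G$ against $B_{n}$ leaves a triple sum still containing the inner Hecke sum $\sum_{j=-n-1}^{n}(-1)^{j+1}q^{-j(2j+1)}$ with its negative quadratic exponents, and no collecting of powers will turn that into a Gaussian coefficient: the factor ${k-1 \brack m}_{q^{2}}$ arises from the closed-form evaluation \eqref{eq:22-3} of the finite $n$-sum, which is available only after \eqref{eq:1-7} has replaced the Hecke double sum by a single $q$-factorial sum. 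The ``single reindexing step'' you attribute to Liu's formula is thus precisely the missing proof, not a routine verification. To repair the argument, drop the tail reduction, prove \eqref{eq:1-7} (via \eqref{eq:22-7} with $q\mapsto q^{2}$, $\alpha=q^{4}$, $\beta=q^{2}$, $c=q^{3}$, $d=0$, then $a\to0$, combined with \eqref{eq:1-6}, which in turn rests on \eqref{eq:1-4} and \eqref{eq:1-5}), and only then run the expand--interchange--evaluate computation on the partial sum, finishing with $q\mapsto-q$ as you correctly anticipated.
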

Let $pp(n)$ count the number of partition pairs $(\lambda,\mu),$
where both $\lambda$ and $\mu$ are ordinary partitions such that
the sum of all the parts of $\lambda$ and $\mu$ equals $n$ and
let $pp(0)=1.$ The reciprocal of the infinite product in \eqref{eq:1-13}
is the generating function for $pp(n).$ From Theorem \ref{t1-6}
we deduce the following inequality for $pp(n)$.
\begin{thm}
\label{tt2} For nonnegative integers $m$ and $N$, we have
\[
\begin{gathered}(-1)^{m}\sum_{n=0}^{m}\sum_{j=-n-1}^{n}(-1)^{j+1}(pp(N-2n^{2}-3n-1+j(3j+1)/2)\\
-pp(N-2n^{2}-5n-3+j(3j+1)/2))\geq0.
\end{gathered}
\]
\end{thm}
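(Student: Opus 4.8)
The plan is to treat Theorem~\ref{t1-6} as an identity between formal power series in $q$ and simply to compare coefficients of $q^{N}$ on the two sides of \eqref{eq:12-1}. The one structural input needed is that the reciprocal of the infinite product in \eqref{eq:1-13} generates $pp(n)$, that is,
\[
\frac{1}{(q;q)_{\infty}^{2}}=\sum_{N=0}^{\infty}pp(N)q^{N},
\]
so that multiplying a theta-type sum by $1/(q;q)_{\infty}^{2}$ turns each exponent into a shifted argument of $pp$.

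First I would expand the left-hand side of \eqref{eq:12-1}. Writing $(1-q^{2n+2})q^{2n^{2}+3n+1}=q^{2n^{2}+3n+1}-q^{2n^{2}+5n+3}$ and distributing, the inner double sum becomes
\[
\sum_{n=0}^{m}\sum_{j=-n-1}^{n}(-1)^{j+1}\bigl(q^{2n^{2}+3n+1-j(3j+1)/2}-q^{2n^{2}+5n+3-j(3j+1)/2}\bigr).
\]
Multiplying by $\sum_{N}pp(N)q^{N}$ and reading off the coefficient of $q^{N}$ produces precisely
\[
S_{N}:=\sum_{n=0}^{m}\sum_{j=-n-1}^{n}(-1)^{j+1}\bigl(pp(N-2n^{2}-3n-1+j(3j+1)/2)-pp(N-2n^{2}-5n-3+j(3j+1)/2)\bigr),
\]
the double sum appearing in the theorem; in other words, $S_{N}$ is exactly the coefficient of $q^{N}$ on the left-hand side of \eqref{eq:12-1}.

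Next I would analyse the right-hand side. The isolated term $1$ contributes only at $N=0$, while the remaining tail equals $(-1)^{m}$ times
\[
T:=q^{m(m+1)/2}\sum_{k=m+1}^{\infty}\sum_{i=0}^{k}\frac{q^{(m+2)i+k}}{(q;q)_{i}(q;q)_{k-i}}{k-1 \brack m}_{q}.
\]
The heart of the matter is that $T$ is a power series with non-negative coefficients: each factor $1/(q;q)_{i}=\prod_{l=1}^{i}(1-q^{l})^{-1}$ and $1/(q;q)_{k-i}$ expands with non-negative coefficients, the Gaussian binomial ${k-1\brack m}_{q}$ is a polynomial with non-negative coefficients (being the generating function for partitions fitting inside an $m\times(k-1-m)$ box), and these are multiplied only by the positive monomials $q^{m(m+1)/2}$ and $q^{(m+2)i+k}$. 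Hence $[q^{N}]T\ge0$ for every $N$. Comparing coefficients on the two sides of \eqref{eq:12-1} therefore gives $S_{N}-\delta_{N,0}=(-1)^{m}[q^{N}]T$, so that $(-1)^{m}S_{N}=[q^{N}]T\ge0$ for every $N\ge1$, which is the asserted inequality; at $N=0$ the identity simply collapses to $S_{0}=pp(0)=1$.

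I do not expect a genuine obstacle here, since the result is a direct corollary of Theorem~\ref{t1-6}: the only real content is the sign-definiteness of the right-hand side, which rests on the standard non-negativity of the Gaussian binomial coefficients and of the reciprocal $q$-Pochhammer symbols. The points requiring care are purely administrative, namely tracking the sign $(-1)^{j+1}$ and the index shifts when passing from exponents of $q$ to arguments of $pp$, and isolating the effect of the constant term $1$ (which only perturbs the trivial case $N=0$).
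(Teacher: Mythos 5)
Your derivation is correct and is exactly the argument the paper intends: the paper explicitly omits the proofs of Theorems \ref{tt1}, \ref{tt2} and \ref{tt3} as ``quite easy,'' and they are meant to follow from the truncated identities precisely as you do it --- extract the coefficient of $q^{N}$ from \eqref{eq:12-1}, using $1/(q;q)_{\infty}^{2}=\sum_{N\geq0}pp(N)q^{N}$ (with $pp$ vanishing on negative arguments) on the left, and invoke on the right the coefficientwise nonnegativity of $1/(q;q)_{i}$, $1/(q;q)_{k-i}$ and of the Gaussian polynomial ${k-1 \brack m}_{q}$ for $k\geq m+1$. One caveat, which your own bookkeeping exposes but which you should state more bluntly: since the lowest-order term of your series $T$ is $q^{m(m+1)/2+(m+1)}=q^{(m+1)(m+2)/2}$, the constant term of the right-hand side of \eqref{eq:12-1} is $1$, so $S_{0}=1$ and hence $(-1)^{m}S_{0}=(-1)^{m}$, which is $-1$ for odd $m$ (e.g.\ $m=1$, $N=0$: the only surviving term is $n=0$, $j=-1$, contributing $pp(0)=1$). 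Thus the inequality as literally stated, ``for nonnegative integers $m$ and $N$,'' fails at $N=0$ when $m$ is odd, and your proof correctly establishes it only for $N\geq1$; saying the $N=0$ case ``simply collapses to $S_{0}=1$'' glosses over the fact that this contradicts the claimed sign. This is a blemish in the theorem's statement (one the paper, having omitted the proof, never confronts) rather than in your method; it is repaired by restricting to $N\geq1$, or by writing the conclusion as $(-1)^{m}(S_{N}-\delta_{N,0})\geq0$.
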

Let us define $p_{e}\mathrm{pod}(n)$ to be the number of partition
pairs $(\lambda,\mu),$ where $\lambda$ is a partition into even
parts only and $\mu$ is a partition whose odd parts are distinct
and let $p_{e}\mathrm{pod}(0)=1$. Then the reciprocal of the infinite
product in \eqref{eq:1-8} is the generating function for $p_{e}\mathrm{pod}(n)$.
From Theorem \ref{t1-7} we obtain a family of inequality for $p_{e}\mathrm{pod}(n)$.
\begin{thm}
\label{tt3} For nonnegative integers $m$ and $N$, we have
\[
\begin{gathered}(-1)^{m}\sum_{n=0}^{m}\sum_{j=-n-1}^{n}(-1)^{j+1}(p_{e}\mathrm{pod}(N-3n^{2}-4n-1+j(2j+1))\\
-p_{e}\mathrm{pod}(N-3n^{2}-8n-5+j(2j+1)))\geq0.
\end{gathered}
\]
\end{thm}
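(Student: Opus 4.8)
The plan is to deduce Theorem \ref{tt3} directly from the truncated identity \eqref{eq:12-3} of Theorem \ref{t1-7} by a coefficient comparison, exactly as one passes from a truncated theta identity to a partition inequality in the Andrews--Merca framework (and parallel to how Theorem \ref{tt1} comes from Theorem \ref{t1-2}).

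First I would record the generating function. Since $p_{e}\mathrm{pod}(n)$ counts pairs $(\lambda,\mu)$ with $\lambda$ into even parts and $\mu$ having distinct odd parts, its generating function is $\sum_{n\geq0}p_{e}\mathrm{pod}(n)q^{n}=\frac{(-q;q^{2})_{\infty}}{(q^{2};q^{2})_{\infty}^{2}}$, which is precisely the prefactor on the left of \eqref{eq:12-3}. Writing $S_{m}(q)=\sum_{n=0}^{m}(1-q^{4n+4})q^{3n^{2}+4n+1}\sum_{j=-n-1}^{n}(-1)^{j+1}q^{-j(2j+1)}$ for the truncated sum and $R_{m}(q)$ for the double series on the right, \eqref{eq:12-3} reads $\big(\sum_{n\geq0}p_{e}\mathrm{pod}(n)q^{n}\big)S_{m}(q)=1+(-1)^{m}q^{m(m+1)}R_{m}(q)$.

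The key step is to observe that $R_{m}(q)$ has nonnegative power-series coefficients. Each building block is manifestly nonnegative: $q^{(2m+2)j+2k}$ is a monomial, $(-q;q^{2})_{k-j}=\prod_{i=0}^{k-j-1}(1+q^{2i+1})$ has nonnegative coefficients, the reciprocals $1/(q^{2};q^{2})_{j}$ and $1/(q^{2};q^{2})_{k-j}$ are generating functions for partitions into bounded even parts, and the Gaussian binomial ${k-1 \brack m}_{q^{2}}$ has nonnegative coefficients. Since sums and products of series with nonnegative coefficients again have nonnegative coefficients, the same holds for $R_{m}(q)$, and hence $(-1)^{m}\big[\big(\sum_{n\geq0}p_{e}\mathrm{pod}(n)q^{n}\big)S_{m}(q)-1\big]=q^{m(m+1)}R_{m}(q)$ has nonnegative coefficients.

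Finally I would extract the coefficient of $q^{N}$. Expanding $S_{m}(q)=\sum_{n=0}^{m}\sum_{j=-n-1}^{n}(-1)^{j+1}\big(q^{3n^{2}+4n+1-j(2j+1)}-q^{3n^{2}+8n+5-j(2j+1)}\big)$ and multiplying by the generating function, the coefficient of $q^{N}$ in $\big(\sum_{n\geq0}p_{e}\mathrm{pod}(n)q^{n}\big)S_{m}(q)$ equals $\sum_{n=0}^{m}\sum_{j=-n-1}^{n}(-1)^{j+1}\big(p_{e}\mathrm{pod}(N-3n^{2}-4n-1+j(2j+1))-p_{e}\mathrm{pod}(N-3n^{2}-8n-5+j(2j+1))\big)$, with the convention $p_{e}\mathrm{pod}(t)=0$ for $t<0$. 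For $N\geq1$ the term $1$ contributes nothing to $[q^{N}]$, so multiplying by $(-1)^{m}$ and invoking the nonnegativity of $q^{m(m+1)}R_{m}(q)$ yields exactly the asserted inequality. The only genuine input is Theorem \ref{t1-7}; granting it, the corollary is routine, and the one place to be careful is the manifest nonnegativity of \emph{every} factor of $R_{m}(q)$ together with the exact bookkeeping of the exponents $3n^{2}+4n+1$ and $3n^{2}+8n+5=3n^{2}+4n+1+(4n+4)$ in the coefficient extraction.
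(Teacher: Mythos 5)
Your proposal is correct and is exactly the derivation the paper intends: the paper omits the proofs of Theorems \ref{tt1}--\ref{tt3} as routine consequences of the truncated identities, and your route --- extracting the coefficient of $q^{N}$ in \eqref{eq:12-3} after observing that every factor of the right-hand double series (the monomial $q^{(2m+2)j+2k}$, the product $(-q;q^{2})_{k-j}=\prod_{i=0}^{k-j-1}(1+q^{2i+1})$, the reciprocals $1/(q^{2};q^{2})_{j}$ and $1/(q^{2};q^{2})_{k-j}$, and ${k-1 \brack m}_{q^{2}}$) has nonnegative coefficients --- is precisely the intended argument, with the generating function $\sum_{n\geq0}p_{e}\mathrm{pod}(n)q^{n}=(-q;q^{2})_{\infty}/(q^{2};q^{2})_{\infty}^{2}$ and the exponent bookkeeping $3n^{2}+4n+1+(4n+4)=3n^{2}+8n+5$ both handled correctly. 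The only caveat concerns the edge case you implicitly excluded: at $N=0$ only the $n=0$, $j=-1$ term survives, the inner sum equals $p_{e}\mathrm{pod}(0)=1$, and the asserted inequality reduces to $(-1)^{m}\geq0$, which fails for odd $m$, so your restriction to $N\geq1$ is in fact the correct scope and the discrepancy is a slight overstatement in the paper's own formulation rather than a gap in your argument.
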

In the last section, we establish a new truncated theorem for \eqref{eq:12-5}.
This truncated version is quite different from that of Guo and Zeng
\cite[Theorem 1.1]{GZ}.
\begin{thm}
\label{tt4} For any nonnegative integer $n$, we have
\[
\begin{aligned} & \frac{(-q;q)_{\infty}}{(q;q)_{\infty}}\sum_{j=-n}^{n}(-1)^{j}q^{j^{2}}\\
 & =1+(-1)^{n}q^{n(n+1)}\sum_{m=n+1}^{\infty}\sum_{j=0}^{m}\frac{(-q^{-2n};q^{2})_{j}(-1;q^{2})_{m-j}}{(q^{2};q^{2})_{j}(q^{2};q^{2})_{m-j}}q^{(2n+1)j+m}{m-1 \brack n}_{q^{2}}.
\end{aligned}
\]
\end{thm}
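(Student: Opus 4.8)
The plan is to follow the same general strategy that underlies the other truncated results in the paper, namely to start from the Hecke-type expression for the relevant infinite product and isolate the truncated sum using a $q$-transformation. The first Gauss identity \eqref{eq:12-5} states that $(q;q)_\infty/(-q;q)_\infty=\sum_{n=-\infty}^{\infty}(-1)^nq^{n^2}$, so dividing by the product $(q;q)_\infty/(-q;q)_\infty$ turns the left-hand side of the claimed identity into $1$ plus a tail. Concretely, I would write
\[
\frac{(-q;q)_\infty}{(q;q)_\infty}\sum_{j=-n}^{n}(-1)^jq^{j^2}
=1-\frac{(-q;q)_\infty}{(q;q)_\infty}\sum_{|j|>n}(-1)^jq^{j^2},
\]
and the whole content of the theorem is to show that the tail $-\frac{(-q;q)_\infty}{(q;q)_\infty}\sum_{|j|>n}(-1)^jq^{j^2}$ equals the stated double sum with the explicit sign $(-1)^nq^{n(n+1)}$ out front. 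Folding the symmetric tail $\sum_{|j|>n}$ into a one-sided sum over $j\ge n+1$ (using $q^{j^2}=q^{(-j)^2}$) produces a sum that begins at $j=n+1$ with leading $q$-power $q^{(n+1)^2}$, which matches the overall factor $q^{n(n+1)}$ after extracting it.

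The key analytic input should be the formula from Liu \cite{L13} on which the paper says all proofs heavily rely, applied here with the base replaced by $q^2$. I would recast $\sum_{j\ge n+1}(-1)^jq^{j^2}$, after pulling out $q^{(n+1)^2}=q^{n(n+1)}\cdot q^{n+1}$, as a basic hypergeometric series in the variable $q^2$, and then reindex so that the summation variable runs over $j\ge 0$. The factors $(-q^{-2n};q^2)_j$ and $(-1;q^2)_{m-j}$ in the target, together with the denominators $(q^2;q^2)_j(q^2;q^2)_{m-j}$ and the Gaussian binomial ${m-1\brack n}_{q^2}$, strongly suggest that the mechanism is a $q$-analogue of a convolution/Vandermonde-type rearrangement in base $q^2$: the product $(-q;q)_\infty/(q;q)_\infty$ is expanded via Gauss's identity and Euler's product formulas, the finite theta tail is written through Liu's transformation, and the two are merged by collecting the coefficient of each power, which yields the inner $\sum_{j=0}^{m}$ with the shifted argument $-q^{-2n}$ and the Gaussian binomial recording the truncation level $n$.

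The main obstacle, as in the companion theorems, will be the bookkeeping that converts the one-sided theta tail into exactly the claimed double sum: one must verify that applying Liu's formula with the correct specialization reproduces the precise parameters $(-q^{-2n};q^2)_j$, $q^{(2n+1)j+m}$ and ${m-1\brack n}_{q^2}$, rather than some equivalent but differently-indexed expression. I expect to handle this by first proving the identity formally at the level of generating functions (so that the product $(-q;q)_\infty/(q;q)_\infty$ is never expanded term-by-term in isolation), and then checking the base case and the shift in the summation range to pin down the additive shift $q^{n(n+1)}$ and the sign $(-1)^n$. A useful consistency check will be to compare with the structurally parallel Theorem \ref{t1-2}, whose right-hand side has the same shape with $(-q^{-2m-1};q^2)_i$ and $(-1;q^2)_{k-i}$; the present theorem is the analogous statement attached to \eqref{eq:12-5} instead of \eqref{eq:11-1}, so the derivation should mirror that of \eqref{eq:11-7} step for step, with $pp_e$ replaced by the simpler Gauss product and the quadratic exponent $2n^2+n$ replaced by $n^2$.
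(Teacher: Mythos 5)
Your reduction of the theorem via Gauss's identity \eqref{eq:12-5} to a statement about the tail $\sum_{|j|>n}(-1)^{j}q^{j^{2}}$ is a valid equivalence, but your plan for proving that tail statement has a genuine gap: the tools you invoke are all transformations of \emph{terminating} series, while your object is an infinite tail. Liu's formulas \eqref{eq:22-6} and \eqref{eq:22-7} from \cite{L13}, and likewise \eqref{eq:1-2}, transform sums of the shape $\sum_{n=0}^{m}$ whose termination is forced by a factor such as $(q^{-m};q)_{n}$; none of them can be applied to $\sum_{j\geq n+1}(-1)^{j}q^{j^{2}}$ ``recast as a basic hypergeometric series,'' and your sketch never supplies the convolution identity that is supposed to merge this tail with the expanded product --- the central mechanism is left as a hope (``strongly suggest,'' ``I expect to handle this''). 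You also drop the factor $2$ when folding $\sum_{|j|>n}$ into a one-sided sum (in the final formula it hides inside $(-1;q^{2})_{m-j}$). What is actually needed --- playing the role that Lemma \ref{l4-1} plays for Theorem \ref{t1-2} --- is a closed terminating form for the \emph{truncated} sum itself, obtained from \eqref{eq:1-2} (Liu's Proposition 2.4 in \cite{L6}, not the \cite{L13} transformation) with $q\mapsto q^{2}$, $\alpha=1$, $c=-q$, $d=-q^{2}$:
\[
(-1)^{n}q^{n(n+1)}\,{}_{3}\phi_{2}\left(\begin{matrix}q^{-2n},q^{2n+2},q\\
-q,-q^{2}
\end{matrix};q^{2},1\right)=\sum_{j=-n}^{n}(-1)^{j}q^{j^{2}}.
\]
Without this (or an equivalent) evaluation, the specific parameters $(-q^{-2n};q^{2})_{j}$, $q^{(2n+1)j+m}$ and ${m-1 \brack n}_{q^{2}}$ cannot be produced, and ``checking base cases'' cannot substitute for it.

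The paper's proof then never splits off a tail at all: it multiplies the displayed identity by $(-q;q)_{\infty}/(q;q)_{\infty}$, absorbs the infinite products into each term of the finite sum, expands the two remaining infinite products by the $q$-binomial theorem \eqref{eq:22-2}, evaluates the inner sum over the original index $k$ by \eqref{eq:22-3}, which creates the factor $(q^{2(i+j-n)};q^{2})_{n}$, and sets $m=i+j$. The leading $1$ is the $m=0$ term, since $(-1)^{n}q^{n(n+1)}(q^{-2n};q^{2})_{n}/(q^{2};q^{2})_{n}=1$ --- so it does \emph{not} come from Gauss's identity, as it does in your decomposition --- and the vanishing of $(q^{2(m-n)};q^{2})_{n}$ for $0<m\leq n$ simultaneously yields the truncation at $m=n+1$ and the Gaussian binomial ${m-1 \brack n}_{q^{2}}$. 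Your instinct that the argument should mirror the proof of \eqref{eq:11-7} step for step is correct, but the terminating evaluation above is exactly the step your proposal leaves unexecuted, and it is the whole proof.
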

The derivations of these inequalities for partition functions in Theorems
\ref{tt1}, \ref{tt2} and \ref{tt3} are quite easy, so we omit them
in this paper.

Lastly, it should be noted that the essential tool for our proofs
is a formula from Liu's work \cite{L13}.

\section{\label{sec:Pre} Preliminaries}

In this section we collect several useful formulas on basic hypergeometric
series.

Throughout this paper we use the following compact $q$-notation:
\[
(a_{1},a_{2},\cdots,a_{m};q)_{n}:=(a_{1};q)_{n}(a_{2};q)_{n}\cdots(a_{m};q)_{n},
\]
where $n$ is an integer or $\infty$.

The basic hypergeometric series $_{r+1}\phi_{r}$ is defined by \cite[(1.2.22)]{GR}
\[
_{r+1}\phi_{r}\left(\begin{matrix}a_{0},a_{1},\cdots,a_{r}\\
b_{1},\cdots,b_{r}
\end{matrix};q,z\right):=\sum_{n=0}^{\infty}\frac{(a_{0},a_{1},\cdots,a_{r};q)_{n}}{(q,b_{1},\cdots,b_{r};q)_{n}}z^{n}.
\]

The\textbf{ $q$}-binomial theorem \cite[(1.3.2)]{GR} may be one
of the most important identities in basic hypergeometric series:

\begin{equation}
\sum_{n=0}^{\infty}\frac{(a;q)_{n}}{(q;q)_{n}}z^{n}=\frac{(az;q)_{\infty}}{(z;q)_{\infty}}\label{eq:22-2}
\end{equation}
where $|z|<1,\:|q|<1.$

Some special cases of the\textbf{ $q$}-binomial theorem are as follows
\cite[(1.3.15), (1.3.16) and (1.3.14)]{GR}: 
\begin{equation}
\sum_{n=0}^{\infty}\frac{z^{n}}{(q;q)_{n}}=\frac{1}{(z;q)_{\infty}},\;|z|<1,\label{eq:12-2}
\end{equation}
\begin{equation}
\sum_{n=0}^{\infty}\frac{q^{n(n-1)/2}}{(q;q)_{n}}z^{n}=(-z;q)_{\infty}\label{eq:22-4}
\end{equation}
and
\begin{equation}
\sum_{j=0}^{m}\frac{(q^{-m};q)_{j}}{(q;q)_{j}}z^{j}=(q^{-m}z;q)_{m},\label{eq:22-3}
\end{equation}
where $m$ is a nonnegative integer.

Liu employed a general $q$-transformation formula for terminating
$q$-series to establish the following transformation formula \cite[Theorem 10.2]{L13}:
for any nonnegative integer $m$, we have
\begin{equation}
\begin{aligned} & \frac{(\alpha q,\alpha ab/q;q)_{m}}{(\alpha a,\alpha b;q)_{m}}\text{}_{4}\phi_{3}\left(\begin{matrix}q^{-m},q/a,q/b,\beta\\
q^{2}/\alpha abq^{m},c,d
\end{matrix};q,q\right)\\
 & =\sum_{n=0}^{m}\frac{(1-\alpha q^{2n})(q^{-m},\alpha,q/a,q/b;q)_{n}(\alpha abq^{m-1})^{n}}{(1-\alpha)(q,\alpha q^{m+1},\alpha a,\alpha b;q)_{n}}\text{}_{3}\phi_{2}\left(\begin{matrix}q^{-n},\alpha q^{n},\beta\\
c,d
\end{matrix};q,q\right).
\end{aligned}
\label{eq:22-6}
\end{equation}

Let $q/b=\alpha q^{m+1}$ in \eqref{eq:22-6}. We get the following
$q$-transformation formula:
\begin{equation}
\begin{aligned} & \frac{(\alpha q,q^{2}/a;q)_{m}}{(\alpha a,q;q)_{m}}\bigg(\frac{a}{q}\bigg)^{m}\text{}_{4}\phi_{3}\left(\begin{matrix}q^{-m},\alpha q^{m+1},q/a,\beta\\
q^{2}/a,c,d
\end{matrix};q,q\right)\\
 & =\sum_{n=0}^{m}\frac{(1-\alpha q^{2n})(\alpha,q/a;q)_{n}(a/q)^{n}}{(1-\alpha)(q,\alpha a;q)_{n}}\text{}_{3}\phi_{2}\left(\begin{matrix}q^{-n},\alpha q^{n},\beta\\
c,d
\end{matrix};q,q\right).
\end{aligned}
\label{eq:22-7}
\end{equation}

From \cite[(3.2.5) and  (3.2.6)]{GR} we get
\begin{equation}
\text{}_{3}\phi_{2}\left(\begin{matrix}q^{-n},a,b\\
d,e
\end{matrix};q,\frac{deq^{n}}{ab}\right)=\frac{(e/a;q)_{n}}{(e;q)_{n}}\text{}_{3}\phi_{2}\left(\begin{matrix}q^{-n},a,d/b\\
d,aq^{1-n}/e
\end{matrix};q,q\right),\label{eq:1-1}
\end{equation}
and
\begin{equation}
\begin{aligned} & \text{}_{3}\phi_{2}\left(\begin{matrix}q^{-n},aq^{n},b\\
d,e
\end{matrix};q,\frac{de}{ab}\right)\\
 & =\frac{(aq/d,aq/e;q)_{n}}{(d,e;q)_{n}}\bigg(\frac{de}{aq}\bigg)^{n}\text{}_{3}\phi_{2}\left(\begin{matrix}q^{-n},aq^{n},abq/de\\
aq/d,aq/e
\end{matrix};q,\frac{q}{b}\right).
\end{aligned}
\label{eq:7-1}
\end{equation}

From \cite[Proposition 2.4]{L6}\footnote{The factor $(-1)^{n}$ is missing on the left-hand side of \cite[Proposition 2.4]{L6}.}
we have

\begin{equation}
\begin{aligned} & (-1)^{n}\frac{(\alpha q;q)_{n}}{(q;q)_{n}}q^{{n+1 \choose 2}}\text{}_{3}\phi_{2}\left(\begin{matrix}q^{-n},\alpha q^{n+1},\alpha cd/q\\
\alpha c,\alpha d
\end{matrix};q,1\right)\\
 & =\sum_{j=0}^{n}(-1)^{j}\frac{(1-\alpha q^{2j})(\alpha,q/c,q/d;q)_{j}}{(1-\alpha)(q,\alpha c,\alpha d;q)_{j}}q^{j(j-3)/2}(\alpha cd)^{j}.
\end{aligned}
\label{eq:1-2}
\end{equation}

From \cite[Lemma 4.1]{L5} we find that
\begin{equation}
\sum_{k=0}^{n}\frac{(q^{-n},aq^{n};q)_{k}q^{k}}{(cq;q)_{k}}=a^{n}q^{n^{2}}\frac{(q;q)_{n}}{(cq;q)_{n}}\sum_{j=0}^{n}\frac{(c;q)_{j}a^{-j}q^{j(1-n)}}{(q;q)_{j}}.\label{eq:1-4}
\end{equation}

\section{\label{sec:P} Proofs of Theorems \ref{t1-3} and \ref{t1-4}}

In this section, we will give our proofs of Theorems \ref{t1-3} and
\ref{t1-4}. We first prove the following results.
\begin{lem}
\label{l2-1} For any nonnegative integer $m$, we have

\begin{equation}
\begin{aligned} & (-1)^{m}q^{m(m+1)/2}\sum_{n=0}^{m}(q^{-m};q)_{n}(q^{m+1};q)_{n+2}\\
 & =\sum_{n=0}^{m}(1-q^{2n+2})q{}^{2n^{2}+3n+1}\sum_{j=-n-1}^{n}(-1)^{j+1}q^{-j(3j+1)/2}
\end{aligned}
\label{eq:1-12}
\end{equation}
and
\begin{equation}
\begin{aligned} & (-1)^{m}q^{m(m+1)}\sum_{n=0}^{m}\frac{(q^{-2m};q^{2})_{n}(q^{2m+2};q^{2})_{n+2}}{(q;q^{2})_{n+1}}\\
 & =\sum_{n=0}^{m}(-1)^{n}(1-q^{4n+4})q^{3n^{2}+4n+1}\sum_{j=-n-1}^{n}q^{-j(2j+1)}.
\end{aligned}
\label{eq:1-7}
\end{equation}
\end{lem}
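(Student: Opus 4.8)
The plan is to prove each of the two identities \eqref{eq:1-12} and \eqref{eq:1-7} by establishing that both sides equal a common Hecke-type double sum, and the natural device for this is the finite summation formula \eqref{eq:1-2} specialized appropriately. First I would focus on \eqref{eq:1-12}. The left-hand side is a single sum in $n$ of the telescoping-looking product $(q^{-m};q)_{n}(q^{m+1};q)_{n+2}$, and the right-hand side is a genuine Hecke-type double sum with an inner sum over $j$ from $-n-1$ to $n$. My strategy is to recognize the inner product on the left as (up to normalization) a terminating $_{3}\phi_{2}$ evaluated at argument $1$, so that \eqref{eq:1-2} can be applied termwise. Concretely, I would rewrite $(q^{-m};q)_{n}(q^{m+1};q)_{n+2}$ by pulling out the factors $(q^{m+1};q)_{2}=(1-q^{m+1})(1-q^{m+2})$ and expressing the remaining product as a ratio of $q$-shifted factorials matching the summand structure of the left side of \eqref{eq:1-2}. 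The key is to choose the parameters $\alpha, c, d$ in \eqref{eq:1-2} so that the right-hand side of \eqref{eq:1-2} produces exactly the inner sum $\sum_{j=-n-1}^{n}(-1)^{j+1}q^{-j(3j+1)/2}$ after the quadratic exponent $q^{2n^{2}+3n+1}$ and the factor $(1-q^{2n+2})$ are accounted for.

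The main technical step will be matching exponents. The quadratic form $j(3j+1)/2$ on the right of \eqref{eq:1-12} must emerge from the exponent $j(j-3)/2$ together with the power $(\alpha c d)^{j}$ appearing on the right of \eqref{eq:1-2}; this forces a specific choice such as $\alpha = q^{2n+\text{const}}$ and $c, d$ chosen so that $\alpha cd$ contributes the extra $q^{j}$ needed to convert $j(j-3)/2$ into $j(3j+1)/2$ after re-indexing. Since the target inner sum runs over the symmetric-ish range $-n-1 \le j \le n$ whereas \eqref{eq:1-2} sums $j$ from $0$ to $n$, I expect to split the target into its $j\ge 0$ and $j<0$ halves and recognize each half (after the substitution $j \mapsto -j-1$ in the negative part) as a $_{3}\phi_{2}$ of the form in \eqref{eq:1-2}, possibly with two different parameter specializations whose sum reassembles the full range. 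This bookkeeping of the two halves and the shift $(-1)^{j+1}$ versus $(-1)^{j}$ is where the real care is needed.

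For \eqref{eq:1-7} the approach is entirely parallel but with base $q$ replaced by $q^{2}$: the left-hand side carries the denominator $(q;q^{2})_{n+1}$ and the products $(q^{-2m};q^{2})_{n}(q^{2m+2};q^{2})_{n+2}$, so I would apply \eqref{eq:1-2} with $q$ replaced by $q^{2}$ and with $c, d$ chosen to generate the $(q;q^{2})_{n+1}$ factor and the inner quadratic $j(2j+1)$. The exponent matching is the analogous obstacle: $j(j-3)/2$ in base $q^{2}$ becomes $j(j-3)$, and I must tune $\alpha cd$ so that the net exponent is $-j(2j+1)$ over the range $-n-1 \le j \le n$. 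I expect the sign pattern to differ from the first identity: here the outer sign is $(-1)^{n}$ and the inner summand carries no sign, so the parameter choice must arrange for the $(-1)^{j}$ in \eqref{eq:1-2} to cancel against a compensating sign, likely by taking $c$ or $d$ to be a negative power of $q^{2}$. Throughout, the hardest part will be verifying that the quadratic and linear exponents, the signs, and the summation ranges all align simultaneously under a single parameter specialization; once \eqref{eq:1-2} is correctly invoked, both identities follow by summing the resulting relation over $n$ from $0$ to $m$.
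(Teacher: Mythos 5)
There is a genuine structural gap in your plan: \eqref{eq:1-2} cannot do the job you assign to it. Each term of the left-hand side of \eqref{eq:1-12} is a bare product $(q^{-m};q)_{n}(q^{m+1};q)_{n+2}$, while the left-hand side of \eqref{eq:1-2} is a prefactor times a terminating ${}_{3}\phi_{2}$; the roles of the indices do not match, and in fact the outer index $n$ on the two sides of \eqref{eq:1-12} does \emph{not} correspond termwise --- the identity holds only for the full sums. More decisively, the right-hand side of \eqref{eq:1-2} is a \emph{single} sum over $j$, whereas the right-hand side of \eqref{eq:1-12} is a genuine double sum; so even if you specialize \eqref{eq:1-2} for each fixed $n$ to produce the inner sum $\sum_{j=-n-1}^{n}(-1)^{j+1}q^{-j(3j+1)/2}$ and then ``sum the resulting relation over $n$,'' you are left with the unproved claim that $\sum_{n=0}^{m}(\text{prefactor}_{n})\times{}_{3}\phi_{2}$ equals the single sum $(-1)^{m}q^{m(m+1)/2}\sum_{n=0}^{m}(q^{-m};q)_{n}(q^{m+1};q)_{n+2}$. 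That equality of a single sum with a double sum is precisely the content of Liu's transformation \eqref{eq:22-6}, specialized as \eqref{eq:22-7}, which is the engine of the paper's proof (applied with $\alpha=q^{2}$, $\beta=q$, $c=d=0$, $a\rightarrow0$ for \eqref{eq:1-12}, and with $q\mapsto q^{2}$, $\alpha=q^{4}$, $\beta=q^{2}$, $c=q^{3}$, $d=0$ for \eqref{eq:1-7}); your proposal contains no tool of this kind, and without it the proof cannot close.

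Two smaller points of comparison. First, for the inner evaluations the paper does not use \eqref{eq:1-2} at all: it evaluates the inner ${}_{3}\phi_{2}$-type sums via \eqref{eq:1-4} combined with the known identities \eqref{eq:1-11} and Liu's (7.15), yielding \eqref{eq:1-10} and \eqref{eq:1-6}. Your idea of extracting the pentagonal-type exponent from $q^{j(j-3)/2}(\alpha cd)^{j}$ is not hopeless --- letting $c,d\rightarrow0$ in \eqref{eq:1-2} contributes an extra $q^{j(j+1)}$, giving quadratic coefficient $3/2$, and the substitution $q\mapsto q^{-1}$ (used elsewhere in the paper, e.g.\ to obtain \eqref{eq:33-5}) flips the sign --- so that half of your plan could likely be repaired. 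Second, your proposed splitting of the range $-n-1\leq j\leq n$ into $j\geq0$ and $j<0$ is unnecessary: in the paper the asymmetric range emerges automatically from the cited closed-form identities, and in the \eqref{eq:1-2}-based variant it would come from telescoping the factor $(1-\alpha q^{2j})$. But these repairs concern only the inner sums; the missing double-sum transformation \eqref{eq:22-7} remains a fatal gap as the argument stands.
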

\noindent{\it Proof.} We first prove \eqref{eq:1-12}. Recall the
following identity \cite[(8.28)]{L1}:
\begin{equation}
\sum_{j=0}^{n}\frac{q^{-j(n+1)}}{(q;q)_{j}}=(-1)^{n+1}\frac{q^{{n+2 \choose 2}}}{(q;q)_{n+1}}\sum_{j=-n-1}^{n}(-1)^{j}q^{-j(3j+1)/2}.\label{eq:1-11}
\end{equation}
Take $a=q^{2},c=0$ in \eqref{eq:1-4}. We have
\begin{equation}
\begin{aligned}\sum_{k=0}^{n}(q^{-n},q^{n+2};q)_{k}q^{k} & =q^{n^{2}+2n}(q;q)_{n}\sum_{j=0}^{n}\frac{q^{-j(1+n)}}{(q;q)_{j}}\\
 & =\frac{(-1)^{n+1}q^{\frac{3n^{2}}{2}+\frac{7n}{2}+1}}{1-q^{n+1}}\sum_{j=-n-1}^{n}(-1)^{j}q^{-j(3j+1)/2},
\end{aligned}
\label{eq:1-10}
\end{equation}
where for the last equality we used \eqref{eq:1-11}.

Setting $\alpha=q^{2},\beta=q,c=d=0$ in \eqref{eq:22-7} gives
\[
\begin{aligned} & \frac{(q^{3},q^{2}/a;q)_{m}}{(aq^{2},q;q)_{m}}\bigg(\frac{a}{q}\bigg)^{m}\sum_{n=0}^{m}\frac{(q^{-m},q^{m+3},q/a;q)_{n}}{(q^{2}/a;q)_{n}}q^{n}\\
 & =\sum_{n=0}^{m}\frac{(1-q^{2n+2})(q^{2},q/a;q)_{n}(a/q)^{n}}{(1-q^{2})(q,aq^{2};q)_{n}}\sum_{k=0}^{n}(q^{-n},q^{n+2};q)_{k}q^{k}\\
 & =\sum_{n=0}^{m}\frac{(-1)^{n+1}(1-q^{2n+2})(q/a;q)_{n}a{}^{n}q^{\frac{3n^{2}}{2}+\frac{5n}{2}+1}}{(1-q)(1-q^{2})(aq^{2};q)_{n}}\sum_{j=-n-1}^{n}(-1)^{j}q^{-j(3j+1)/2},
\end{aligned}
\]
where for the last equality we have used \eqref{eq:1-10}. Then the
identity \eqref{eq:1-12} is obtained by setting $a\rightarrow0$
in the above identity and then simplifying.

We now show \eqref{eq:1-7}. Replacing $n$ by $n+1$ in \cite[(7.15)]{L1}
we have
\[
\sum_{j=0}^{n+1}\frac{(q;q^{2})_{j}q^{-(n+1)(2j+1)}}{(q^{2};q^{2})_{j}}=\frac{(q;q^{2})_{n+1}}{(q^{2};q^{2})_{n+1}}\sum_{j=0}^{2n+2}q^{-{j+1 \choose 2}}.
\]
Note that the $j=n+1$ term on the left-hand side equals the $j=2n+2$
term on the right-hand side. We obtain
\begin{equation}
\begin{aligned}\sum_{j=0}^{n}\frac{(q;q^{2})_{j}q^{-(n+1)(2j+1)}}{(q^{2};q^{2})_{j}} & =\frac{(q;q^{2})_{n+1}}{(q^{2};q^{2})_{n+1}}\sum_{j=0}^{2n+1}q^{-j(j+1)/2}\\
 & =\frac{(q;q^{2})_{n+1}}{(q^{2};q^{2})_{n+1}}\sum_{j=-n-1}^{n}q^{-j(2j+1)}.
\end{aligned}
\label{eq:1-5}
\end{equation}
Replace $q$ by $q^{2}$ and then set $a=q^{4},c=q$ in \eqref{eq:1-4}.
We arrive at
\[
\sum_{k=0}^{n}\frac{(q^{-2n},q^{2n+4};q^{2})_{k}q^{2k}}{(q^{3};q^{2})_{k}}=q^{2n^{2}+5n+1}\frac{(q^{2};q^{2})_{n}}{(q^{3};q^{2})_{n}}\sum_{j=0}^{n}\frac{(q;q^{2})_{j}q^{-(2j+1)(1+n)}}{(q^{2};q^{2})_{j}}.
\]
Substituting \eqref{eq:1-5} into this identity yields
\begin{equation}
\sum_{k=0}^{n}\frac{(q^{-2n},q^{2n+4};q^{2})_{k}q^{2k}}{(q^{3};q^{2})_{k}}=\frac{(1-q)q^{2n^{2}+5n+1}}{1-q^{2n+2}}\sum_{j=-n-1}^{n}q^{-j(2j+1)}.\label{eq:1-6}
\end{equation}
Replacing $q$ by $q^{2}$ in \eqref{eq:22-7} and then taking $\alpha=q^{4},\beta=q^{2},c=q^{3},d=0$
we get
\[
\begin{aligned} & \frac{(q^{6},q^{4}/a;q^{2})_{m}}{(aq^{4},q^{2};q^{2})_{m}}\bigg(\frac{a}{q^{2}}\bigg)^{m}\sum_{n=0}^{m}\frac{(q^{-2m},q^{2m+6},q^{2}/a;q^{2})_{n}}{(q^{4}/a,q^{3};q^{2})_{n}}q^{2n}\\
 & =\sum_{n=0}^{m}\frac{(1-q^{4n+4})(q^{4},q^{2}/a;q^{2})_{n}(a/q^{2})^{n}}{(1-q^{4})(q^{2},aq^{4};q^{2})_{n}}\sum_{j=0}^{n}\frac{(q^{-2n},q^{2n+4};q^{2})_{j}q^{2j}}{(q^{3};q^{2})_{j}}\\
 & =\sum_{n=0}^{m}\frac{(1-q)(1-q^{4n+4})(q^{2}/a;q^{2})_{n}a{}^{n}q^{2n^{2}+3n+1}}{(1-q^{2})(1-q^{4})(aq^{4};q^{2})_{n}}\sum_{j=-n-1}^{n}q^{-j(2j+1)},
\end{aligned}
\]
where for the last equality we used \eqref{eq:1-6}. Then the identity
\eqref{eq:1-7} follows easily by setting $a\rightarrow0$ in this
identity and then simplifying. This completes the proof of Lemma \ref{l2-1}.
\qed

We are now in the position to prove Theorems \ref{t1-3} and \ref{t1-4}.

\noindent{\it Proof of Theorem \ref{t1-3}.} We treat the left-hand
side of \eqref{eq:1-12}: 
\begin{align*}
 & (-1)^{m}q^{m(m+1)/2}\sum_{n=0}^{m}(q^{-m};q)_{n}(q^{m+1};q)_{n+2}\\
 & =\frac{(-1)^{m}q^{m(m+1)/2}}{(q;q)_{m}}\sum_{n=0}^{m}(q^{-m};q)_{n}(q;q)_{m+n+2}\\
 & =\sum_{n=0}^{m}\frac{(q;q)_{m+n+2}}{(q;q)_{m-n}}(-1)^{m-n}q^{(m-n)(m-n+1)/2}\\
 & =\sum_{n=0}^{m}\frac{(q;q)_{2m-n+2}}{(q;q)_{n}}(-1)^{n}q^{n(n+1)/2}.
\end{align*}
Thus \eqref{eq:1-12} can be rewritten as
\begin{align*}
 & \sum_{n=0}^{m}\frac{(q;q)_{2m-n+2}}{(q;q)_{n}}(-1)^{n}q^{n(n+1)/2}\\
 & =\sum_{n=0}^{m}(1-q^{2n+2})q{}^{2n^{2}+3n+1}\sum_{j=-n-1}^{n}(-1)^{j+1}q^{-j(3j+1)/2}.
\end{align*}
We conclude by setting $m\rightarrow\infty$ in this identity that
\[
(q;q)_{\infty}\sum_{n=0}^{\infty}\frac{(-1)^{n}q^{n(n+1)/2}}{(q;q)_{n}}=\sum_{n=0}^{\infty}\sum_{j=-n-1}^{n}(-1)^{j+1}(1-q^{2n+2})q{}^{2n^{2}+3n+1-j(3j+1)/2}.
\]
Using \eqref{eq:22-4} in the above identity we can easily obtain
\eqref{eq:1-13}. This finishes the proof of Theorem \ref{t1-3}.
\qed

\noindent{\it Proof of Theorem \ref{t1-4}.} The left-hand side of
\eqref{eq:1-7} can be written as
\begin{align*}
 & (-1)^{m}q^{m(m+1)}\sum_{n=0}^{m}\frac{(q^{-2m};q^{2})_{n}(q^{2m+2};q^{2})_{n+2}}{(q;q^{2})_{n+1}}\\
 & =\frac{(-1)^{m}q^{m(m+1)}}{(q^{2};q^{2})_{m}}\sum_{n=0}^{m}\frac{(q^{-2m};q^{2})_{n}(q^{2};q^{2})_{m+n+2}}{(q;q^{2})_{n+1}}\\
 & =\sum_{n=0}^{m}(-1)^{m-n}q^{(m-n)(m-n+1)}\frac{(q^{2};q^{2})_{m+n+2}}{(q^{2};q^{2})_{m-n}(q;q^{2})_{n+1}}\\
 & =\sum_{n=0}^{m}(-1)^{n}q^{n(n+1)}\frac{(q^{2};q^{2})_{2m-n+2}}{(q^{2};q^{2})_{n}(q;q^{2})_{m-n+1}}.
\end{align*}
Then \eqref{eq:1-7} becomes
\begin{align*}
 & \sum_{n=0}^{m}(-1)^{n}q^{n(n+1)}\frac{(q^{2};q^{2})_{2m-n+2}}{(q^{2};q^{2})_{n}(q;q^{2})_{m-n+1}}\\
 & =\sum_{n=0}^{m}(-1)^{n}(1-q^{4n+4})q^{3n^{2}+4n+1}\sum_{j=-n-1}^{n}q^{-j(2j+1)}.
\end{align*}
Let $m\rightarrow\infty$ in this equation. We obtain
\[
\frac{(q^{2};q^{2})_{\infty}}{(q;q^{2})_{\infty}}\sum_{n=0}^{\infty}\frac{(-1)^{n}q^{n(n+1)}}{(q^{2};q^{2})_{n}}=\sum_{n=0}^{\infty}\sum_{j=-n-1}^{n}(-1)^{n}(1-q^{4n+4})q^{3n^{2}+4n+1-j(2j+1)}.
\]
Then the formula \eqref{eq:1-8} follows by applying \eqref{eq:22-4}
in the above identity. This concludes the proof of Theorem \ref{t1-4}.
\qed

\section{\label{sec:3} New proofs of \eqref{eq:11-2} and \eqref{eq:11-1}}

In this section we provide new proofs of \eqref{eq:11-2} and \eqref{eq:11-1}.
We first prove the following auxiliary results.
\begin{lem}
\label{l4-1} For any nonnegative integer $m$, we have
\begin{equation}
\begin{aligned} & (-1)^{m}q^{m(m+1)}\sum_{n=0}^{m}\frac{(q^{-2m};q^{2})_{n}(q^{2m+2};q^{2})_{n+1}}{(-q^{2};q^{2})_{n}(-q;q^{2})_{n+1}}\\
 & =\sum_{n=0}^{m}(1-q^{2n+1})q^{2n^{2}+n}\sum_{j=-n}^{n}(-1)^{j}q^{-j^{2}}
\end{aligned}
\label{eq:33-2}
\end{equation}
and 
\begin{equation}
\begin{aligned} & (-1)^{m}q^{m(m+1)/2}\sum_{n=0}^{m}\frac{(q^{-m};q)_{n}(q^{m+1};q)_{n+2}}{(q;q^{2})_{n+1}}\\
 & =\sum_{n=0}^{m}(-1)^{n}(1-q^{2n+2})q^{n^{2}+n}\sum_{j=0}^{n}q^{-j(j+1)/2}.
\end{aligned}
\label{eq:33-4}
\end{equation}
\end{lem}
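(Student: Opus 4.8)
The plan is to prove both identities by the same three-step machinery used for Lemma \ref{l2-1}, treating \eqref{eq:33-4} as a companion of \eqref{eq:1-12} (base $q$) and \eqref{eq:33-2} as a companion of \eqref{eq:1-7} (base $q^{2}$). In each case I would (i) record a folding identity that collapses a one-sided sum $\sum_{j=0}^{n}$ into the symmetric inner sum appearing on the right-hand side; (ii) feed this into \eqref{eq:1-4} with a suitable choice of the parameters $a$ and $c$ so as to evaluate a terminating $k$-sum of the shape $\sum_{k=0}^{n}(q^{-n},\alpha q^{n};q)_{k}q^{k}/(cq;q)_{k}$ in closed form; and (iii) specialize the transformation \eqref{eq:22-7} so that its inner ${}_{3}\phi_{2}$ is exactly this $k$-sum, substitute, and then let $a\to0$ and simplify the surviving prefactor.

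For \eqref{eq:33-4} I would work in base $q$ and imitate the derivation of \eqref{eq:1-12}, where the choice $\alpha=q^{2},\beta=q,c=d=0$ reduces the inner ${}_{3}\phi_{2}$ of \eqref{eq:22-7} to $\sum_{k}(q^{-n},q^{n+2};q)_{k}q^{k}$. The two new features are the extra denominator $(q;q^{2})_{n+1}$ on the left of \eqref{eq:33-4} and the replacement of the inner sum by $\sum_{j=0}^{n}q^{-j(j+1)/2}$. The factor $(q;q^{2})_{k}$ can be manufactured inside the transformation through the factorization $(q;q^{2})_{k}=(q^{1/2};q)_{k}(-q^{1/2};q)_{k}$, that is, by taking $c=q^{1/2}$ and $d=-q^{1/2}$ rather than $c=d=0$; the folding step would then use a partial-theta identity for $\sum_{j=0}^{n}q^{-j(j+1)/2}$ of the same flavour as \eqref{eq:1-5}. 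After substituting the evaluated $k$-sum and sending $a\to0$, the surviving prefactor and Pochhammer symbols should reorganize into the left-hand side of \eqref{eq:33-4}, exactly as the analogous collapse produced the left-hand side of \eqref{eq:1-12}.

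For \eqref{eq:33-2} I would instead replace $q$ by $q^{2}$ throughout \eqref{eq:1-4} and \eqref{eq:22-7}, exactly as in the passage from \eqref{eq:1-6} to \eqref{eq:1-7}. Here the inner sum is the classical symmetric Gauss sum $\sum_{j=-n}^{n}(-1)^{j}q^{-j^{2}}$, so step (i) requires the $q^{2}$-analogue of the folding identities \eqref{eq:1-11}/\eqref{eq:1-5} that outputs $\sum_{j=-n}^{n}(-1)^{j}q^{-j^{2}}$. The mixed denominator $(-q^{2};q^{2})_{n}(-q;q^{2})_{n+1}$ is, up to its top factor, the single symbol $(-q;q)_{2n+1}$, since $(-q;q)_{2n+1}=(-q^{2};q^{2})_{n}(-q;q^{2})_{n+1}$; this is what one should aim to produce from the lower parameters $c,d$ of the $q^{2}$-version of \eqref{eq:22-7} (for instance through a choice such as $c=-q,d=-q^{2}$). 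Again the closing move is $a\to0$, after which the prefactor is reorganized into the left-hand side of \eqref{eq:33-2}.

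The main obstacle I anticipate is step (i) together with its coupling to step (iii): pinning down the exact folding identities for the one-sided inner sums and the precise assignment $(\alpha,\beta,c,d)$ in \eqref{eq:22-7} that simultaneously reduces the inner ${}_{3}\phi_{2}$ to a sum evaluable by \eqref{eq:1-4} \emph{and} reproduces the prescribed mixed-base denominators $(q;q^{2})_{n+1}$ and $(-q^{2};q^{2})_{n}(-q;q^{2})_{n+1}$. The half-integer powers $q^{1/2},-q^{1/2}$ introduced to split $(q;q^{2})_{k}$ must cancel cleanly in the final formula, and the $a\to0$ limit has to be taken with care so that the leading power of $a$ in the prefactor $(\alpha q,q^{2}/a;q)_{m}(a/q)^{m}/(\alpha a,q;q)_{m}$ exactly matches the decaying factors $(q/a;q)_{n}(a/q)^{n}$ (and their $q^{2}$-analogues) inside the sum. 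Once these alignments are correct, the remaining manipulations are the same routine Pochhammer simplifications as in the proof of Lemma \ref{l2-1}.
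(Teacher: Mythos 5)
Your overall architecture is exactly the paper's: specialize the transformation \eqref{eq:22-7} so that the lower parameters $c,d$ manufacture the mixed-base denominators, evaluate the resulting inner $_{3}\phi_{2}$ in closed form, and close with $a\to0$. Your structural observations are also sound, e.g.\ $(-q;q)_{2n+1}=(-q^{2};q^{2})_{n}(-q;q^{2})_{n+1}$ and the idea of splitting $(q;q^{2})$-type symbols into half-integer-power factors. Two of your parameter guesses are off by one shift, though: matching the left-hand side of \eqref{eq:33-4} forces $\alpha=q^{2},\beta=q,c=q^{3/2},d=-q^{3/2}$ (since $(q;q^{2})_{n+1}=(1-q)(q^{3};q^{2})_{n}$, whereas your $c=q^{1/2},d=-q^{1/2}$ produces $(q;q^{2})_{n}$), and matching \eqref{eq:33-2} in base $q^{2}$ forces $\alpha=\beta=q^{2},c=-q^{2},d=-q^{3}$ rather than $c=-q,d=-q^{2}$. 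You flagged the parameter-pinning as an obstacle, so this part is a repairable detail.

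The genuine gap is your step (ii). The evaluation \eqref{eq:1-4} covers only sums of the shape $\sum_{k=0}^{n}(q^{-n},aq^{n};q)_{k}q^{k}/(cq;q)_{k}$, i.e.\ a $_{3}\phi_{2}$ whose third numerator parameter is $q$ (cancelling the $(q;q)_{k}$) and which has at most \emph{one} nonzero lower parameter; this is precisely why it sufficed for \eqref{eq:1-12} (where $c=d=0$) and for \eqref{eq:1-7} (where $d=0$). Here, however, reproducing the denominators $(q;q^{2})_{n+1}$ and $(-q^{2};q^{2})_{n}(-q;q^{2})_{n+1}$ forces \emph{both} $c$ and $d$ to be nonzero, while $\beta$ is already spent cancelling the $(q;q)_{n}$-factor, so the inner sums become $_{3}\phi_{2}\left(\begin{smallmatrix}q^{-n},q^{n+2},q\\ q^{3/2},-q^{3/2}\end{smallmatrix};q,q\right)$ and $_{3}\phi_{2}\left(\begin{smallmatrix}q^{-2n},q^{2n+2},q^{2}\\ -q^{2},-q^{3}\end{smallmatrix};q^{2},q^{2}\right)$, which no choice of $a,c$ in \eqref{eq:1-4} can reach. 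The ``folding identities of the same flavour as \eqref{eq:1-5}'' that you defer to are exactly the missing mathematical content, and they are not obtained by folding at all: the paper quotes Andrews's evaluation \eqref{eq:33-1} from \cite{An} for the base-$q^{2}$ case, and for the base-$q$ case derives \eqref{eq:33-5} by specializing \eqref{eq:1-2} with $\alpha=q$, $c=q^{1/2}$, $d=-q^{1/2}$ (the half-integer parameters you wanted, but fed into \eqref{eq:1-2}, not into \eqref{eq:22-7}), transforming via \eqref{eq:7-1}, and inverting $q\mapsto q^{-1}$. Without these two closed-form $_{3}\phi_{2}$ evaluations, your scheme cannot close.
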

\begin{proof}
We first prove \eqref{eq:33-2}. Recall the following identity on
basic hypergeometric series \cite[(6.11)]{An}:
\begin{equation}
_{3}\phi_{2}\left(\begin{matrix}q^{-2n},q^{2n+2},q^{2}\\
-q^{2},-q^{3}
\end{matrix};q^{2},q^{2}\right)=(-1)^{n}\frac{(1+q)q^{n^{2}+2n}}{1+q^{2n+1}}\sum_{j=-n}^{n}(-1)^{j}q^{-j^{2}},\label{eq:33-1}
\end{equation}
where $n$ is a nonnegative integer.

Replace $q$ by $q^{2}$ and then set $\alpha=\beta=q^{2},c=-q^{2},d=-q^{3}$
in \eqref{eq:22-7}. We get
\[
\begin{aligned} & \frac{(q^{4},q^{4}/a;q^{2})_{m}}{(aq^{2},q^{2};q^{2})_{m}}\bigg(\frac{a}{q^{2}}\bigg)^{m}\text{}_{4}\phi_{3}\left(\begin{matrix}q^{-2m},q^{2m+4},q^{2}/a,q^{2}\\
q^{4}/a,-q^{2},-q^{3}
\end{matrix};q^{2},q^{2}\right)\\
 & =\sum_{n=0}^{m}\frac{(1-q^{4n+2})(q^{2}/a;q^{2})_{n}(a/q^{2})^{n}}{(1-q^{2})(aq^{2};q^{2})_{n}}\text{}_{3}\phi_{2}\left(\begin{matrix}q^{-2n},q^{2n+2},q^{2}\\
-q^{2},-q^{3}
\end{matrix};q^{2},q^{2}\right)\\
 & =(1+q)\sum_{n=0}^{m}\frac{(1-q^{2n+1})(q^{2}/a;q^{2})_{n}(-a)^{n}q^{n^{2}}}{(1-q^{2})(aq^{2};q^{2})_{n}}\sum_{j=-n}^{n}(-1)^{j}q^{-j^{2}}
\end{aligned}
\]
where for the last equality we used the identity \eqref{eq:33-1}.
Then the identity \eqref{eq:33-2} follows easily by setting $a\rightarrow0$
in the above formula.

We now show \eqref{eq:33-4}. Set $\alpha=q,c=q^{1/2},d=-q^{1/2}$
in \eqref{eq:1-2}. We obtain
\begin{equation}
_{3}\phi_{2}\left(\begin{matrix}q^{-n},q^{n+2},-q\\
q^{3/2},-q^{3/2}
\end{matrix};q,1\right)=(-1)^{n}\frac{1-q}{1-q^{n+1}}q^{-n(n+1)/2}\sum_{j=0}^{n}q^{j(j+1)/2}.\label{eq:4-5}
\end{equation}
Putting $(a,b,d,e)=(q^{2},q,q^{3/2},-q^{3/2})$ in \eqref{eq:7-1}
we have
\[
_{3}\phi_{2}\left(\begin{matrix}q^{-n},q^{n+2},q\\
q^{3/2},-q^{3/2}
\end{matrix};q,-1\right)=(-1)^{n}{}_{3}\phi_{2}\left(\begin{matrix}q^{-n},q^{n+2},-q\\
q^{3/2},-q^{3/2}
\end{matrix};q,1\right).
\]
Substituting \eqref{eq:4-5} into the above identity we find that
\[
_{3}\phi_{2}\left(\begin{matrix}q^{-n},q^{n+2},q\\
q^{3/2},-q^{3/2}
\end{matrix};q,-1\right)=\frac{1-q}{1-q^{n+1}}q^{-n(n+1)/2}\sum_{j=0}^{n}q^{j(j+1)/2}.
\]
Replacing $q$ by $q^{-1}$ in this identity yields
\begin{equation}
_{3}\phi_{2}\left(\begin{matrix}q^{-n},q^{n+2},q\\
q^{3/2},-q^{3/2}
\end{matrix};q,q\right)=q^{n(n+3)/2}\frac{1-q}{1-q^{n+1}}\sum_{j=0}^{n}q^{-j(j+1)/2}.\label{eq:33-5}
\end{equation}
Put $\alpha=q^{2},\beta=q,c=q^{3/2},d=-q^{3/2}$ and let $a\rightarrow0$
in \eqref{eq:22-7}. We get
\[
\begin{aligned} & (-1)^{m}q^{m(m+1)/2}\sum_{n=0}^{m}\frac{(q^{-m};q)_{n}(q^{m+1};q)_{n+2}}{(q;q^{2})_{n+1}}\\
 & =\sum_{n=0}^{m}\frac{(1-q^{2n+2})(1-q^{n+1})(-1)^{n}q^{n(n-1)/2}}{1-q}\text{}_{3}\phi_{2}\left(\begin{matrix}q^{-n},q^{n+2},q\\
q^{3/2},-q^{3/2}
\end{matrix};q,q\right)\\
 & =\sum_{n=0}^{m}(-1)^{n}(1-q^{2n+2})q^{n^{2}+n}\sum_{j=0}^{n}q^{-j(j+1)/2},
\end{aligned}
\]
where for the last equality we used \eqref{eq:33-5}. This finishes
the proof of Lemma \ref{l4-1}.
\end{proof}
We are now ready to show \eqref{eq:11-2} and \eqref{eq:11-1}.

\noindent{\it Proof of \eqref{eq:11-2}.} We rewrite the left-hand
side of \eqref{eq:33-4} as:

\begin{align*}
 & (-1)^{m}q^{m(m+1)/2}\sum_{n=0}^{m}\frac{(q^{-m};q)_{n}(q^{m+1};q)_{n+2}}{(q;q^{2})_{n+1}}\\
 & =\sum_{n=0}^{m}(-1)^{m-n}q^{(m-n)(m-n+1)/2}\frac{(q;q)_{m+n+2}}{(q;q)_{m-n}(q;q^{2})_{n+1}}\\
 & =\sum_{n=0}^{m}(-1)^{n}q^{n(n+1)/2}\frac{(q;q)_{2m-n+2}}{(q;q)_{n}(q;q^{2})_{m-n+1}}.
\end{align*}
The identity \eqref{eq:33-4} becomes
\[
\sum_{n=0}^{m}(-1)^{n}q^{n(n+1)/2}\frac{(q;q)_{2m-n+2}}{(q;q)_{n}(q;q^{2})_{m-n+1}}=\sum_{n=0}^{m}(-1)^{n}(1-q^{2n+2})q^{n^{2}+n}\sum_{j=0}^{n}q^{-j(j+1)/2}.
\]
Set $m\rightarrow0$ in this identity. We obtain
\[
\frac{(q;q)_{\infty}}{(q;q^{2})_{\infty}}\sum_{n=0}^{\infty}\frac{(-1)^{n}q^{n(n+1)/2}}{(q;q)_{n}}=\sum_{n=0}^{\infty}\sum_{j=0}^{n}(-1)^{n}(1-q^{2n+2})q^{n^{2}+n-j(j+1)/2}.
\]
Then the result \eqref{eq:11-2} follows by using \eqref{eq:22-4}
in the above identity. \qed

\noindent{\it Proof of \eqref{eq:11-1}.} We first treat the left-hand
side of \eqref{eq:33-2}:
\begin{align*}
 & (-1)^{m}q^{m(m+1)}\sum_{n=0}^{m}\frac{(q^{-2m};q^{2})_{n}(q^{2m+2};q^{2})_{n+1}}{(-q^{2};q^{2})_{n}(-q;q^{2})_{n+1}}\\
 & =(-1)^{m}q^{m(m+1)}\sum_{n=0}^{m}\frac{(-1)^{n}q^{n^{2}-n-2mn}(q^{2};q^{2})_{m}(q^{2m+2};q^{2})_{n+1}}{(q^{2};q^{2})_{m-n}(-q^{2};q^{2})_{n}(-q;q^{2})_{n+1}}\\
 & =\sum_{n=0}^{m}\frac{(-1)^{m-n}q^{(m-n)(m-n+1)}(q^{2};q^{2})_{m+n+1}}{(q^{2};q^{2})_{m-n}(-q^{2};q^{2})_{n}(-q;q^{2})_{n+1}}\\
 & =\sum_{n=0}^{m}\frac{(-1)^{n}q^{n(n+1)}(q^{2};q^{2})_{2m-n+1}}{(q^{2};q^{2})_{n}(-q^{2};q^{2})_{m-n}(-q;q^{2})_{m-n+1}}.
\end{align*}
Then the identity \eqref{eq:33-2} becomes
\[
\sum_{n=0}^{m}\frac{(-1)^{n}q^{n(n+1)}(q^{2};q^{2})_{2m-n+1}}{(q^{2};q^{2})_{n}(-q^{2};q^{2})_{m-n}(-q;q^{2})_{m-n+1}}=\sum_{n=0}^{m}(1-q^{2n+1})q^{2n^{2}+n}\sum_{j=-n}^{n}(-1)^{j}q^{-j^{2}}.
\]
Let $m\rightarrow\infty$ in the above formula. We conclude that
\[
\frac{(q^{2};q^{2})_{\infty}}{(-q;q)_{\infty}}\sum_{n=0}^{\infty}\frac{(-1)^{n}q^{n(n+1)}}{(q^{2};q^{2})_{n}}=\sum_{n=0}^{\infty}(1-q^{2n+1})q^{2n^{2}+n}\sum_{j=-n}^{n}(-1)^{j}q^{-j^{2}}.
\]
Then the identity \eqref{eq:11-1} follows easily by using \eqref{eq:22-4}
in the above equation. \qed

\section{\label{sec:5} Proofs of Theorems \ref{t1-1}, \ref{t1-5}, \ref{t1-2},
\ref{t1-6} and \ref{t1-7}}

In this section we give the proofs of Theorems \ref{t1-1}, \ref{t1-5},
\ref{t1-2}, \ref{t1-6} and \ref{t1-7}. We first prove an auxiliary
result.
\begin{lem}
For any nonnegative integer $m$, we have
\begin{equation}
\begin{aligned} & (-1)^{m}q^{m(m+1)/2}\sum_{n=0}^{m}\frac{(q^{-m};q)_{n}(q^{m+1};q)_{n+1}}{(q;q^{2})_{n+1}}\\
 & =\sum_{n=0}^{m}(-1)^{n}q^{n(n+1)}\sum_{j=-n}^{n}q^{-j(j+1)/2}.
\end{aligned}
\label{eq:4-12}
\end{equation}
\end{lem}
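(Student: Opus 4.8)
The plan is to prove \eqref{eq:4-12} in exactly the same spirit as Lemmas \ref{l2-1} and \ref{l4-1}: first evaluate in closed form the terminating \({}_3\phi_2\) that will arise as the inner series, and then feed this evaluation into the transformation \eqref{eq:22-7} with a judicious choice of parameters, letting \(a\to 0\) at the end. Since the left-hand side of \eqref{eq:4-12} carries the factor \((q^{m+1};q)_{n+1}=(1-q^{m+1})(q^{m+2};q)_{n}\), the natural specialization is \(\alpha=q\) (so that \(\alpha q^{m+1}=q^{m+2}\)), together with \(\beta=q\), \(c=q^{3/2}\), \(d=-q^{3/2}\), because then \((c;q)_n(d;q)_n=(q^{3/2};q)_n(-q^{3/2};q)_n=(q^3;q^2)_n=(q;q^2)_{n+1}/(1-q)\), which reproduces the denominator \((q;q^2)_{n+1}\).

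The crux is the analogue of \eqref{eq:33-5}, namely the evaluation
\[
{}_3\phi_2\left(\begin{matrix}q^{-n},q^{n+1},q\\ q^{3/2},-q^{3/2}\end{matrix};q,q\right)=\frac{(1-q)q^{n(n+3)/2}}{1-q^{2n+1}}\sum_{j=-n}^{n}q^{-j(j+1)/2},
\]
in which the \emph{symmetric} range \(\sum_{j=-n}^{n}\) (rather than the half range \(\sum_{j=0}^{n}\) of \eqref{eq:33-5}) appears. I expect this to be the main obstacle. Deriving it by the same route as \eqref{eq:33-5} is not immediate, because in \eqref{eq:1-2} the top parameter \(\alpha q^{n+1}\) would force \(\alpha=1\). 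I would instead obtain it either by chaining \eqref{eq:1-2}, \eqref{eq:7-1} and \eqref{eq:1-1} with the appropriate half-integer parameters followed by the substitution \(q\to q^{-1}\), or by exploiting the splitting \(\sum_{j=-n}^{n}q^{-j(j+1)/2}=\sum_{j=0}^{n}q^{-j(j+1)/2}+\sum_{j=0}^{n-1}q^{-j(j+1)/2}\) and combining the two resulting instances of \eqref{eq:33-5} through a contiguous relation linking the \({}_3\phi_2\) with top parameter \(q^{n+1}\) to those with top parameter \(q^{n+2}\). Establishing and verifying that contiguous relation is where the real work lies.

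With the evaluation in hand, the assembly is routine bookkeeping. Setting \(\alpha=\beta=q\), \(c=q^{3/2}\), \(d=-q^{3/2}\) in \eqref{eq:22-7} and letting \(a\to 0\), the prefactor tends to \(\frac{(q^2;q)_m}{(q;q)_m}(-1)^m q^{m(m+1)/2}=\frac{1-q^{m+1}}{1-q}(-1)^m q^{m(m+1)/2}\), while the factor \((q/a;q)_n/(q^2/a;q)_n\to q^{-n}\) collapses the \({}_4\phi_3\) to \(\sum_{n}(q^{-m};q)_n(q^{m+2};q)_n/(q^3;q^2)_n\); combining these and using \((q^3;q^2)_n=(q;q^2)_{n+1}/(1-q)\) together with \((1-q^{m+1})(q^{m+2};q)_n=(q^{m+1};q)_{n+1}\) reproduces precisely the left-hand side of \eqref{eq:4-12}. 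On the right-hand side of \eqref{eq:22-7} the coefficient tends to \(\frac{(1-q^{2n+1})(-1)^n q^{n(n-1)/2}}{1-q}\), and multiplying it by the displayed evaluation makes the factors \(1-q\) and \(1-q^{2n+1}\) cancel while the powers of \(q\) combine via \(n(n-1)/2+n(n+3)/2=n(n+1)\); the surviving term is exactly \((-1)^n q^{n(n+1)}\sum_{j=-n}^{n}q^{-j(j+1)/2}\). Equating the two limits yields \eqref{eq:4-12}.
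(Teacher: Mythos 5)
Your assembly step is correct and, in fact, coincides exactly with the paper's: taking $\alpha=\beta=q$, $c=q^{3/2}$, $d=-q^{3/2}$ in \eqref{eq:22-7} and letting $a\to0$, the bookkeeping you describe does reduce \eqref{eq:4-12} to the single evaluation
\[
{}_{3}\phi_{2}\left(\begin{matrix}q^{-n},q^{n+1},q\\
q^{3/2},-q^{3/2}
\end{matrix};q,q\right)=\frac{(1-q)\,q^{n(n+3)/2}}{1-q^{2n+1}}\sum_{j=-n}^{n}q^{-j(j+1)/2},
\]
which is precisely the paper's \eqref{eq:4-11}. But you correctly identify this evaluation as the crux and then do not prove it: you offer two candidate strategies (chaining \eqref{eq:1-2}, \eqref{eq:7-1} and \eqref{eq:1-1} with half-integer parameters, or gluing two instances of \eqref{eq:33-5} via a contiguous relation whose ``establishing and verifying'' you defer) and leave both unexecuted. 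As written, this is a plan rather than a proof, and the lemma is not established.

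Moreover, the obstacle you cite for the direct route --- that the top parameter $\alpha q^{n+1}$ in \eqref{eq:1-2} ``would force $\alpha=1$'' --- is not an obstacle at all; resolving it is exactly how the paper proceeds. At $\alpha=1$ the right-hand side of \eqref{eq:1-2} is a well-defined limit, since $(\alpha;q)_{j}/(1-\alpha)\to(q;q)_{j-1}$ for $j\geq1$, and with $c=q^{1/2}$, $d=-q^{1/2}$ one has $q/c=c$ and $q/d=d$, so the sum collapses to $1+\sum_{j=1}^{n}(1+q^{j})q^{j(j-1)/2}=\sum_{j=-n}^{n}q^{j(j+1)/2}$; this is how the symmetric range enters, giving
\[
{}_{3}\phi_{2}\left(\begin{matrix}q^{-n},q^{n+1},-1\\
q^{1/2},-q^{1/2}
\end{matrix};q,1\right)=(-1)^{n}q^{-n(n+1)/2}\sum_{j=-n}^{n}q^{j(j+1)/2}.
\]
The paper then applies \eqref{eq:1-1} twice, with $(a,b,d,e)=(q^{n+1},-1,q^{1/2},-q^{1/2})$ and $(a,b,d,e)=(q^{n+1},q,-q^{3/2},q^{3/2})$, and finishes with the substitution $q\to q^{-1}$ to reach \eqref{eq:4-11}; no appeal to \eqref{eq:7-1} and no new contiguous relation is needed. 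So your first sketched route was essentially the right one, but you abandoned it for a mistaken reason and left the decisive step undone.
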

\begin{proof}
Apply \eqref{eq:1-2} with $\alpha=1,c=q^{1/2},d=-q^{1/2}$. We establish
that
\begin{equation}
_{3}\phi_{2}\left(\begin{matrix}q^{-n},q^{n+1},-1\\
q^{1/2},-q^{1/2}
\end{matrix};q,1\right)=(-1)^{n}q^{-n(n+1)/2}\sum_{j=-n}^{n}q^{j(j+1)/2}.\label{eq:4-9}
\end{equation}
Replacing $(a,b,d,e)$ by $(q^{n+1},-1,q^{1/2},-q^{1/2})$ in \eqref{eq:1-1}
we have
\[
_{3}\phi_{2}\left(\begin{matrix}q^{-n},q^{n+1},-q^{1/2}\\
q^{1/2},-q^{3/2}
\end{matrix};q,q\right)=q^{n^{2}/2+n}\frac{1+q^{1/2}}{1+q^{n+1/2}}{}_{3}\phi_{2}\left(\begin{matrix}q^{-n},q^{n+1},-1\\
q^{1/2},-q^{1/2}
\end{matrix};q,1\right).
\]
Substituting \eqref{eq:4-9} into this identity we conclude that
\begin{equation}
_{3}\phi_{2}\left(\begin{matrix}q^{-n},q^{n+1},-q^{1/2}\\
q^{1/2},-q^{3/2}
\end{matrix};q,q\right)=(-1)^{n}\frac{q^{n/2}(1+q^{1/2})}{1+q^{n+1/2}}\sum_{j=-n}^{n}q^{j(j+1)/2}.\label{eq:4-10}
\end{equation}
Replace $(a,b,d,e)$ by $(q^{n+1},q,-q^{3/2},q^{3/2})$ in \eqref{eq:1-1}.
We find that
\[
_{3}\phi_{2}\left(\begin{matrix}q^{-n},q^{n+1},q\\
q^{3/2},-q^{3/2}
\end{matrix};q,-q\right)=(-1)^{n}q^{-n^{2}/2}\frac{1-q^{1/2}}{1-q^{n+1/2}}{}_{3}\phi_{2}\left(\begin{matrix}q^{-n},q^{n+1},-q^{1/2}\\
q^{1/2},-q^{3/2}
\end{matrix};q,q\right).
\]
We substitute \eqref{eq:4-10} into the above identity to arrive at
\[
_{3}\phi_{2}\left(\begin{matrix}q^{-n},q^{n+1},q\\
q^{3/2},-q^{3/2}
\end{matrix};q,-q\right)=q^{-n(n-1)/2}\frac{1-q}{1-q^{2n+1}}\sum_{j=-n}^{n}q^{j(j+1)/2}.
\]
Replacing $q$ by $q^{-1}$ in this formula and then simplifying we
obtain 
\begin{equation}
_{3}\phi_{2}\left(\begin{matrix}q^{-n},q^{n+1},q\\
q^{3/2},-q^{3/2}
\end{matrix};q,q\right)=q^{n(n+3)/2}\frac{1-q}{1-q^{2n+1}}\sum_{j=-n}^{n}q^{-j(j+1)/2}.\label{eq:4-11}
\end{equation}
Take $\alpha=\beta=q,c=q^{3/2},d=-q^{3/2}$ in \eqref{eq:22-7}. We
get
\[
\begin{aligned} & \frac{(q^{2},q^{2}/a;q)_{m}}{(aq,q;q)_{m}}\bigg(\frac{a}{q}\bigg)^{m}\text{}_{4}\phi_{3}\left(\begin{matrix}q^{-m},q^{m+2},q/a,q\\
q^{2}/a,q^{3/2},-q^{3/2}
\end{matrix};q,q\right)\\
 & =\sum_{n=0}^{m}\frac{(1-q^{2n+1})(q/a;q)_{n}(a/q)^{n}}{(1-q)(aq;q)_{n}}\text{}_{3}\phi_{2}\left(\begin{matrix}q^{-n},q^{n+1},q\\
q^{3/2},-q^{3/2}
\end{matrix};q,q\right)\\
 & =\sum_{n=0}^{m}\frac{(q/a;q)_{n}(a/q)^{n}}{(aq;q)_{n}}q^{n(n+3)/2}\sum_{j=-n}^{n}q^{-j(j+1)/2},
\end{aligned}
\]
where for the last equality we have used \eqref{eq:4-11}. Then the
formula \eqref{eq:4-12} follows by setting $a\rightarrow0$ in the
above identity and then simplifying.
\end{proof}
We are now in a position to prove Theorems \ref{t1-1}, \ref{t1-5}
, \ref{t1-2}, \ref{t1-6} and \ref{t1-7}.

\noindent{\it Proof of Theorem \ref{t1-1}.}  By \eqref{eq:4-12},
the left-hand side of \eqref{eq:11-9} becomes
\begin{align*}
 & \frac{1}{(q^{2};q^{2})_{\infty}(q;q)_{\infty}}\sum_{n=0}^{m}(-1)^{n}q^{n(n+1)}\sum_{j=-n}^{n}q^{-j(j+1)/2}\\
 & =\frac{(-1)^{m}q^{m(m+1)/2}}{(q^{2};q^{2})_{\infty}(q;q)_{\infty}}\sum_{n=0}^{m}\frac{(q^{-m};q)_{n}(q^{m+1};q)_{n+1}}{(q;q^{2})_{n+1}}\\
 & =\frac{(-1)^{m}q^{m(m+1)/2}(q;q^{2})_{\infty}}{(q;q)_{m}(q;q)_{\infty}^{2}}\sum_{n=0}^{m}\frac{(q^{-m};q)_{n}(q;q)_{m+n+1}(q;q)_{n}}{(q;q)_{n}(q;q^{2})_{n+1}}\\
 & =\frac{(-1)^{m}q^{m(m+1)/2}}{(q;q)_{m}}\sum_{n=0}^{m}\frac{(q^{-m};q)_{n}(q^{2n+3};q^{2})_{\infty}}{(q;q)_{n}(q^{m+n+2};q)_{\infty}(q^{n+1};q)_{\infty}}\\
 & =\frac{(-1)^{m}q^{m(m+1)/2}}{(q;q)_{m}}\sum_{n=0}^{m}\frac{(q^{-m};q)_{n}(-q^{n+3/2};q)_{\infty}(q^{n+3/2};q)_{\infty}}{(q;q)_{n}(q^{m+n+2};q)_{\infty}(q^{n+1};q)_{\infty}}\\
 & =\frac{(-1)^{m}q^{m(m+1)/2}}{(q;q)_{m}}\sum_{n=0}^{m}\frac{(q^{-m};q)_{n}}{(q;q)_{n}}\sum_{i=0}^{\infty}\frac{(-q^{-m-1/2};q)_{i}}{(q;q)_{i}}q^{(m+n+2)i}\sum_{j=0}^{\infty}\frac{(q^{1/2};q)_{j}}{(q;q)_{j}}q^{(n+1)j}\\
 & =\frac{(-1)^{m}q^{m(m+1)/2}}{(q;q)_{m}}\sum_{i=0}^{\infty}\frac{(-q^{-m-1/2};q)_{i}}{(q;q)_{i}}q^{(m+2)i}\sum_{j=0}^{\infty}\frac{(q^{1/2};q)_{j}}{(q;q)_{j}}q^{j}\sum_{n=0}^{m}\frac{(q^{-m};q)_{n}}{(q;q)_{n}}q^{(i+j)n}\\
 & =\frac{(-1)^{m}q^{m(m+1)/2}}{(q;q)_{m}}\sum_{i=0}^{\infty}\frac{(-q^{-m-1/2};q)_{i}}{(q;q)_{i}}q^{(m+2)i}\sum_{j=0}^{\infty}\frac{(q^{1/2};q)_{j}(q^{i+j-m};q)_{m}}{(q;q)_{j}}q^{j}\\
 & =\frac{(-1)^{m}q^{m(m+1)/2}}{(q;q)_{m}}\sum_{k=0}^{\infty}(q^{k-m};q)_{m}\sum_{i=0}^{k}\frac{(-q^{-m-1/2};q)_{i}(q^{1/2};q)_{k-i}}{(q;q)_{i}(q;q)_{k-i}}q^{(m+1)i+k}\\
 & =\frac{(-1)^{m}q^{m(m+1)/2}}{(q;q)_{m}}(q^{-m};q)_{m}\\
 & \;+\frac{(-1)^{m}q^{m(m+1)/2}}{(q;q)_{m}}\sum_{k=m+1}^{\infty}(q^{k-m};q)_{m}\sum_{i=0}^{k}\frac{(-q^{-m-1/2};q)_{i}(q^{1/2};q)_{k-i}}{(q;q)_{i}(q;q)_{k-i}}q^{(m+1)i+k}\\
 & =1+(-1)^{m}q^{m(m+1)/2}\sum_{k=m+1}^{\infty}\sum_{i=0}^{k}\frac{(-q^{-m-1/2};q)_{i}(q^{1/2};q)_{k-i}}{(q;q)_{i}(q;q)_{k-i}}q^{(m+1)i+k}{k-1 \brack m}_{q},
\end{align*}
where for the fifth equality we used \eqref{eq:22-2}, for the seventh
equality we applied \eqref{eq:22-3}, for the third to last equality,
we made the change of variables $j=k-i$ and the second to last equality
follows from the identity $(q^{k-m};q)_{m}=0$ for $0<k\leq m$. \qed

\noindent{\it Proof of Theorem \ref{t1-5}.} Applying \eqref{eq:33-4}
in the left-hand side of \eqref{eq:11-8}, we find the left-hand side
of \eqref{eq:11-8} becomes
\begin{align*}
 & \frac{1}{(q^{2};q^{2})_{\infty}(q;q)_{\infty}}\sum_{n=0}^{m}(-1)^{n}(1-q^{2n+2})q^{n^{2}+n}\sum_{j=0}^{n}q^{-j(j+1)/2}\\
 & =\frac{(-1)^{m}q^{m(m+1)/2}}{(q^{2};q^{2})_{\infty}(q;q)_{\infty}}\sum_{n=0}^{m}\frac{(q^{-m};q)_{n}(q^{m+1};q)_{n+2}}{(q;q^{2})_{n+1}}
\end{align*}

\begin{align*}
 & =\frac{(-1)^{m}q^{m(m+1)/2}(q;q^{2})_{\infty}}{(q;q)_{m}(q;q)_{\infty}^{2}}\sum_{n=0}^{m}\frac{(q^{-m};q)_{n}(q;q)_{m+n+2}(q;q)_{n}}{(q;q)_{n}(q;q^{2})_{n+1}}\\
 & =\frac{(-1)^{m}q^{m(m+1)/2}}{(q;q)_{m}}\sum_{n=0}^{m}\frac{(q^{-m};q)_{n}(q^{2n+3};q^{2})_{\infty}}{(q;q)_{n}(q^{n+1};q)_{\infty}(q^{m+n+3};q)_{\infty}}\\
 & =\frac{(-1)^{m}q^{m(m+1)/2}}{(q;q)_{m}}\sum_{n=0}^{m}\frac{(q^{-m};q)_{n}}{(q;q)_{n}}\frac{(q^{n+3/2};q)_{\infty}}{(q^{n+1};q)_{\infty}}\frac{(-q^{n+3/2};q)_{\infty}}{(q^{m+n+3};q)_{\infty}}\\
 & =\frac{(-1)^{m}q^{m(m+1)/2}}{(q;q)_{m}}\sum_{n=0}^{m}\frac{(q^{-m};q)_{n}}{(q;q)_{n}}\sum_{i=0}^{\infty}\frac{(q^{1/2};q)_{i}}{(q;q)_{i}}q^{(n+1)i}\sum_{j=0}^{\infty}\frac{(-q^{-m-3/2};q)_{j}}{(q;q)_{j}}q^{(m+n+3)j}\\
 & =\frac{(-1)^{m}q^{m(m+1)/2}}{(q;q)_{m}}\sum_{i=0}^{\infty}\frac{(q^{1/2};q)_{i}}{(q;q)_{i}}q^{i}\sum_{j=0}^{\infty}\frac{(-q^{-m-3/2};q)_{j}}{(q;q)_{j}}q^{(m+3)j}\sum_{n=0}^{m}\frac{(q^{-m};q)_{n}}{(q;q)_{n}}q^{n(i+j)}\\
 & =\frac{(-1)^{m}q^{m(m+1)/2}}{(q;q)_{m}}\sum_{i=0}^{\infty}\frac{(q^{1/2};q)_{i}}{(q;q)_{i}}q^{i}\sum_{j=0}^{\infty}\frac{(-q^{-m-3/2};q)_{j}(q^{i+j-m};q)_{m}}{(q;q)_{j}}q^{(m+3)j}\\
 & =\frac{(-1)^{m}q^{m(m+1)/2}}{(q;q)_{m}}\sum_{k=0}^{\infty}(q^{k-m};q)_{m}\sum_{j=0}^{k}\frac{(-q^{-m-3/2};q)_{j}(q^{1/2};q)_{k-j}}{(q;q)_{j}(q;q)_{k-j}}q^{(m+2)j+k}\\
 & =\frac{(-1)^{m}q^{m(m+1)/2}}{(q;q)_{m}}(q^{-m};q)_{m}\\
 & \;\;+\frac{(-1)^{m}q^{m(m+1)/2}}{(q;q)_{m}}\sum_{k=m+1}^{\infty}(q^{k-m};q)_{m}\sum_{j=0}^{k}\frac{(-q^{-m-3/2};q)_{j}(q^{1/2};q)_{k-j}}{(q;q)_{j}(q;q)_{k-j}}q^{(m+2)j+k}\\
 & =1+(-1)^{m}q^{m(m+1)/2}\sum_{k=m+1}^{\infty}\sum_{j=0}^{k}\frac{(-q^{-m-3/2};q)_{j}(q^{1/2};q)_{k-j}}{(q;q)_{j}(q;q)_{k-j}}q^{(m+2)j+k}{k-1 \brack m}_{q},
\end{align*}
where for the fifth equality we used \eqref{eq:22-2}, for the seventh
equality we employed \eqref{eq:22-3}, for the third to last equality,
we made the change of variables $i=k-j$ and the second to last equality
follows from the identity $(q^{k-m};q)_{m}=0$ for $0<k\leq m$. \qed

\noindent{\it Proof of Theorem \ref{t1-2}.}  Using \eqref{eq:33-2}
in the left-hand side of \eqref{eq:11-7}, we find that the left-hand
side of \eqref{eq:11-7} becomes
\begin{align*}
 & \frac{1}{(q^{2};q^{2})_{\infty}(q;q)_{\infty}}\sum_{n=0}^{m}(1-q^{2n+1})q^{2n^{2}+n}\sum_{j=-n}^{n}(-1)^{j}q^{-j^{2}}\\
 & =\frac{(-1)^{m}q^{m(m+1)}}{(q^{2};q^{2})_{\infty}(q;q)_{\infty}}\sum_{n=0}^{m}\frac{(q^{-2m};q^{2})_{n}(q^{2m+2};q^{2})_{n+1}}{(-q^{2};q^{2})_{n}(-q;q^{2})_{n+1}}\\
 & =\frac{(-1)^{m}q^{m(m+1)}(-q;q^{2})_{\infty}(-q^{2};q^{2})_{\infty}}{(q^{2};q^{2})_{m}(q^{2};q^{2})_{\infty}^{2}}\sum_{n=0}^{m}\frac{(q^{-2m};q^{2})_{n}(q^{2};q^{2})_{m+n+1}(q^{2};q^{2})_{n}}{(q^{2};q^{2})_{n}(-q^{2};q^{2})_{n}(-q;q^{2})_{n+1}}\\
 & =\frac{(-1)^{m}q^{m(m+1)}}{(q^{2};q^{2})_{m}}\sum_{n=0}^{m}\frac{(q^{-2m};q^{2})_{n}(-q^{2n+3};q^{2})_{\infty}(-q^{2n+2};q^{2})_{\infty}}{(q^{2};q^{2})_{n}(q^{2m+2n+4};q^{2})_{\infty}(q^{2n+2};q^{2})_{\infty}}
\end{align*}
\begin{align*}
 & =\frac{(-1)^{m}q^{m(m+1)}}{(q^{2};q^{2})_{m}}\sum_{n=0}^{m}\frac{(q^{-2m};q^{2})_{n}}{(q^{2};q^{2})_{n}}\sum_{i=0}^{\infty}\frac{(-q^{-2m-1};q^{2})_{i}}{(q^{2};q^{2})_{i}}q^{(2m+2n+4)i}\\
 & \;\;\times\sum_{j=0}^{\infty}\frac{(-1;q^{2})_{j}}{(q^{2};q^{2})_{j}}q^{(2n+2)j}\\
 & =\frac{(-1)^{m}q^{m(m+1)}}{(q^{2};q^{2})_{m}}\sum_{i=0}^{\infty}\frac{(-q^{-2m-1};q^{2})_{i}}{(q^{2};q^{2})_{i}}q^{(2m+4)i}\sum_{j=0}^{\infty}\frac{(-1;q^{2})_{j}}{(q^{2};q^{2})_{j}}q^{2j}\\
 & \;\;\times\sum_{n=0}^{m}\frac{(q^{-2m};q^{2})_{n}}{(q^{2};q^{2})_{n}}q^{2n(i+j)}\\
 & =\frac{(-1)^{m}q^{m(m+1)}}{(q^{2};q^{2})_{m}}\sum_{i=0}^{\infty}\frac{(-q^{-2m-1};q^{2})_{i}}{(q^{2};q^{2})_{i}}q^{(2m+4)i}\sum_{j=0}^{\infty}\frac{(-1;q^{2})_{j}(q^{2(i+j-m)};q^{2})_{m}}{(q^{2};q^{2})_{j}}q^{2j}\\
 & =\frac{(-1)^{m}q^{m(m+1)}}{(q^{2};q^{2})_{m}}\sum_{k=0}^{\infty}(q^{2(k-m)};q^{2})_{m}\sum_{i=0}^{\infty}\frac{(-q^{-2m-1};q^{2})_{i}(-1;q^{2})_{k-i}}{(q^{2};q^{2})_{i}(q^{2};q^{2})_{k-i}}q^{2(m+1)i+2k}\\
 & =\frac{(-1)^{m}q^{m(m+1)}}{(q^{2};q^{2})_{m}}(q^{-2m};q^{2})_{m}\\
 & +\frac{(-1)^{m}q^{m(m+1)}}{(q^{2};q^{2})_{m}}\sum_{k=m+1}^{\infty}(q^{2(k-m)};q^{2})_{m}\sum_{i=0}^{\infty}\frac{(-q^{-2m-1};q^{2})_{i}(-1;q^{2})_{k-i}}{(q^{2};q^{2})_{i}(q^{2};q^{2})_{k-i}}q^{2(m+1)i+2k}\\
 & =1+(-1)^{m}q^{m(m+1)}\sum_{k=m+1}^{\infty}\sum_{i=0}^{\infty}\frac{(-q^{-2m-1};q^{2})_{i}(-1;q^{2})_{k-i}}{(q^{2};q^{2})_{i}(q^{2};q^{2})_{k-i}}q^{2(m+1)i+2k}{k-1 \brack m}_{q^{2}},
\end{align*}
where for the fourth equality we employed \eqref{eq:22-2}, for the
sixth equality we applied \eqref{eq:22-3}, for the third to last
equality, we made the change of variables $j=k-i$ and the second
to last equality follows from the identity $(q^{2(k-m)};q^{2})_{m}=0$
for $0<k\leq m$. \qed

\noindent{\it Proof of Theorem \ref{t1-6}.} Multiplying both sides
of \eqref{eq:1-12} by $\frac{1}{(q;q)_{\infty}^{2}},$ we see that
the left-hand side of \eqref{eq:12-1} becomes
\begin{align*}
 & \frac{1}{(q;q)_{\infty}^{2}}\sum_{n=0}^{m}(1-q^{2n+2})q{}^{2n^{2}+3n+1}\sum_{j=-n-1}^{n}(-1)^{j+1}q^{-j(3j+1)/2}\\
 & =\frac{(-1)^{m}q^{m(m+1)/2}}{(q;q)_{\infty}^{2}}\sum_{n=0}^{m}(q^{-m};q)_{n}(q^{m+1};q)_{n+2}\\
 & =\frac{(-1)^{m}q^{m(m+1)/2}}{(q;q)_{m}(q;q)_{\infty}^{2}}\sum_{n=0}^{m}\frac{(q^{-m};q)_{n}(q;q)_{m+n+2}(q;q)_{n}}{(q;q)_{n}}\\
 & =\frac{(-1)^{m}q^{m(m+1)/2}}{(q;q)_{m}}\sum_{n=0}^{m}\frac{(q^{-m};q)_{n}}{(q;q)_{n}(q^{m+n+3};q)_{\infty}(q^{n+1};q)_{\infty}}\\
 & =\frac{(-1)^{m}q^{m(m+1)/2}}{(q;q)_{m}}\sum_{n=0}^{m}\frac{(q^{-m};q)_{n}}{(q;q)_{n}}\sum_{i=0}^{\infty}\frac{q^{(m+n+3)i}}{(q;q)_{i}}\sum_{j=0}^{\infty}\frac{q^{(n+1)j}}{(q;q)_{j}}\\
 & =\frac{(-1)^{m}q^{m(m+1)/2}}{(q;q)_{m}}\sum_{i=0}^{\infty}\frac{q^{(m+3)i}}{(q;q)_{i}}\sum_{j=0}^{\infty}\frac{q^{j}}{(q;q)_{j}}\sum_{n=0}^{m}\frac{(q^{-m};q)_{n}}{(q;q)_{n}}q^{n(i+j)}
\end{align*}
\begin{align*}
 & =\frac{(-1)^{m}q^{m(m+1)/2}}{(q;q)_{m}}\sum_{i=0}^{\infty}\frac{q^{(m+3)i}}{(q;q)_{i}}\sum_{j=0}^{\infty}\frac{q^{j}(q^{i+j-m};q)_{m}}{(q;q)_{j}}\\
 & =\frac{(-1)^{m}q^{m(m+1)/2}}{(q;q)_{m}}\sum_{k=0}^{\infty}\sum_{i=0}^{k}\frac{q^{(m+2)i+k}(q^{k-m};q)_{m}}{(q;q)_{i}(q;q)_{k-i}}\\
 & =\frac{(-1)^{m}q^{m(m+1)/2}}{(q;q)_{m}}(q^{-m};q)_{m}+\frac{(-1)^{m}q^{m(m+1)/2}}{(q;q)_{m}}\sum_{k=m+1}^{\infty}\sum_{i=0}^{k}\frac{q^{(m+2)i+k}(q^{k-m};q)_{m}}{(q;q)_{i}(q;q)_{k-i}}\\
 & =1+(-1)^{m}q^{m(m+1)/2}\sum_{k=m+1}^{\infty}\sum_{i=0}^{k}\frac{q^{(m+2)i+k}}{(q;q)_{i}(q;q)_{k-i}}{k-1 \brack m}_{q},
\end{align*}
where for the fourth equality we used \eqref{eq:12-2}, for the sixth
equality we employed \eqref{eq:22-3}, for the third to last equality,
we made the change of variables $j=k-i$ and the second to last equality
follows from the identity $(q^{k-m};q)_{m}=0$ for $0<k\leq m$. \qed

\noindent{\it Proof of Theorem \ref{t1-7}.} By \eqref{eq:1-7}, the
left-hand side of \eqref{eq:12-3} becomes
\begin{align*}
 & \frac{(q;q^{2})_{\infty}}{(q^{2};q^{2})_{\infty}^{2}}\sum_{n=0}^{m}(-1)^{n}(1-q^{4n+4})q^{3n^{2}+4n+1}\sum_{j=-n-1}^{n}q^{-j(2j+1)}\\
 & =\frac{(-1)^{m}q^{m(m+1)}(q;q^{2})_{\infty}}{(q^{2};q^{2})_{\infty}^{2}}\sum_{n=0}^{m}\frac{(q^{-2m};q^{2})_{n}(q^{2m+2};q^{2})_{n+2}}{(q;q^{2})_{n+1}}\\
 & =\frac{(-1)^{m}q^{m(m+1)}(q;q^{2})_{\infty}}{(q^{2};q^{2})_{m}(q^{2};q^{2})_{\infty}^{2}}\sum_{n=0}^{m}\frac{(q^{-2m};q^{2})_{n}(q^{2};q^{2})_{m+n+2}(q^{2};q^{2})_{n}}{(q^{2};q^{2})_{n}(q;q^{2})_{n+1}}\\
 & =\frac{(-1)^{m}q^{m(m+1)}}{(q^{2};q^{2})_{m}}\sum_{n=0}^{m}\frac{(q^{-2m};q^{2})_{n}(q^{2n+3};q^{2})_{\infty}}{(q^{2};q^{2})_{n}(q^{2n+2};q^{2})_{\infty}(q^{2m+2n+4};q^{2})_{\infty}}\\
 & =\frac{(-1)^{m}q^{m(m+1)}}{(q^{2};q^{2})_{m}}\sum_{n=0}^{m}\frac{(q^{-2m};q^{2})_{n}}{(q^{2};q^{2})_{n}}\sum_{i=0}^{\infty}\frac{(q;q^{2})_{i}}{(q^{2};q^{2})_{i}}q^{(2n+2)i}\sum_{j=0}^{\infty}\frac{q^{(2m+2n+4)j}}{(q^{2};q^{2})_{j}}\\
 & =\frac{(-1)^{m}q^{m(m+1)}}{(q^{2};q^{2})_{m}}\sum_{i=0}^{\infty}\frac{(q;q^{2})_{i}}{(q^{2};q^{2})_{i}}q^{2i}\sum_{j=0}^{\infty}\frac{q^{(2m+4)j}}{(q^{2};q^{2})_{j}}\sum_{n=0}^{m}\frac{(q^{-2m};q^{2})_{n}}{(q^{2};q^{2})_{n}}q^{2n(i+j)}\\
 & =\frac{(-1)^{m}q^{m(m+1)}}{(q^{2};q^{2})_{m}}\sum_{i=0}^{\infty}\frac{(q;q^{2})_{i}}{(q^{2};q^{2})_{i}}q^{2i}\sum_{j=0}^{\infty}\frac{q^{(2m+4)j}(q^{2i+2j-2m};q^{2})_{m}}{(q^{2};q^{2})_{j}}\\
 & =\frac{(-1)^{m}q^{m(m+1)}}{(q^{2};q^{2})_{m}}\sum_{k=0}^{\infty}(q^{2k-2m};q^{2})_{m}\sum_{j=0}^{k}\frac{q^{(2m+2)j+2k}(q;q^{2})_{k-j}}{(q^{2};q^{2})_{j}(q^{2};q^{2})_{k-j}}\\
 & =\frac{(-1)^{m}q^{m(m+1)}}{(q^{2};q^{2})_{m}}(q^{-2m};q^{2})_{m}\\
 & \;+\frac{(-1)^{m}q^{m(m+1)}}{(q^{2};q^{2})_{m}}\sum_{k=m+1}^{\infty}(q^{2k-2m};q^{2})_{m}\sum_{j=0}^{k}\frac{q^{(2m+2)j+2k}(q;q^{2})_{k-j}}{(q^{2};q^{2})_{j}(q^{2};q^{2})_{k-j}}\\
 & =1+(-1)^{m}q^{m(m+1)}\sum_{k=m+1}^{\infty}\sum_{j=0}^{k}\frac{q^{(2m+2)j+2k}(q;q^{2})_{k-j}}{(q^{2};q^{2})_{j}(q^{2};q^{2})_{k-j}}{k-1 \brack m}_{q^{2}},
\end{align*}
where for the fourth equality we used \eqref{eq:22-2} and \eqref{eq:12-2},
for the sixth equality we applied \eqref{eq:22-3}, for the third
to last equality, we made the change of variables $i=k-j$ and the
second to last equality follows from the identity $(q^{2k-2m};q)_{m}=0$
for $0<k\leq m$. Then the formula \eqref{eq:12-3} follows by replacing
$q$ by $-q$ in the above identity. \qed

\section{Proof of Theorem \ref{tt4}}

Replacing $q$ by $q^{2}$ and then setting $\alpha=1,c=-q,d=-q^{2}$
in \eqref{eq:1-2} we get
\[
(-1)^{n}q^{n(n+1)}{}_{3}\phi_{2}\left(\begin{matrix}q^{-2n},q^{2n+2},q\\
-q,-q^{2}
\end{matrix};q^{2},1\right)=\sum_{j=-n}^{n}(-1)^{j}q^{j^{2}}.
\]
Then
\begin{align*}
 & \frac{(-q;q)_{\infty}}{(q;q)_{\infty}}\sum_{j=-n}^{n}(-1)^{j}q^{j^{2}}\\
 & =(-1)^{n}q^{n(n+1)}\frac{(-q,q)_{\infty}}{(q,q)_{\infty}}\sum_{k=0}^{n}\frac{(q^{-2n},q^{2n+2},q;q^{2})_{k}}{(q^{2},-q,-q^{2};q^{2})_{k}}\\
 & =(-1)^{n}q^{n(n+1)}\frac{(-q,q)_{\infty}}{(q^{2};q^{2})_{n}(q,q)_{\infty}}\sum_{k=0}^{n}\frac{(q^{-2n},q;q^{2})_{k}(q^{2};q^{2})_{n+k}}{(q^{2},-q,-q^{2};q^{2})_{k}}\\
 & =\frac{(-1)^{n}q^{n(n+1)}}{(q^{2};q^{2})_{n}}\sum_{k=0}^{n}\frac{(q^{-2n};q^{2})_{k}}{(q^{2};q^{2})_{k}}\frac{(-q^{1+2k},-q^{2k+2};q^{2})_{\infty}}{(q^{1+2k},q^{2(n+k+1)};q^{2})_{\infty}}\\
 & =\frac{(-1)^{n}q^{n(n+1)}}{(q^{2};q^{2})_{n}}\sum_{k=0}^{n}\frac{(q^{-2n};q^{2})_{k}}{(q^{2};q^{2})_{k}}\sum_{i=0}^{\infty}\frac{(-1;q^{2})_{i}}{(q^{2};q^{2})_{i}}q^{(1+2k)i}\sum_{j=0}^{\infty}\frac{(-q^{-2n};q^{2})_{j}}{(q^{2};q^{2})_{j}}q^{2(n+k+1)j}\\
 & =\frac{(-1)^{n}q^{n(n+1)}}{(q^{2};q^{2})_{n}}\sum_{i=0}^{\infty}\frac{(-1;q^{2})_{i}}{(q^{2};q^{2})_{i}}q^{i}\sum_{j=0}^{\infty}\frac{(-q^{-2n};q^{2})_{j}}{(q^{2};q^{2})_{j}}q^{2(n+1)j}\sum_{k=0}^{n}\frac{(q^{-2n};q^{2})_{k}}{(q^{2};q^{2})_{k}}q^{2(i+j)k}\\
 & =\frac{(-1)^{n}q^{n(n+1)}}{(q^{2};q^{2})_{n}}\sum_{i=0}^{\infty}\frac{(-1;q^{2})_{i}}{(q^{2};q^{2})_{i}}q^{i}\sum_{j=0}^{\infty}\frac{(-q^{-2n};q^{2})_{j}(q^{2(i+j-n)};q^{2})_{n}}{(q^{2};q^{2})_{j}}q^{2(n+1)j}\\
 & =\frac{(-1)^{n}q^{n(n+1)}}{(q^{2};q^{2})_{n}}\sum_{m=0}^{\infty}(q^{2(m-n)};q^{2})_{n}\sum_{j=0}^{m}\frac{(-q^{-2n};q^{2})_{j}(-1;q^{2})_{m-j}}{(q^{2};q^{2})_{j}(q^{2};q^{2})_{m-j}}q^{(2n+1)j+m}\\
 & =\frac{(-1)^{n}q^{n(n+1)}}{(q^{2};q^{2})_{n}}(q^{-2n};q^{2})_{n}\\
 & \;+\frac{(-1)^{n}q^{n(n+1)}}{(q^{2};q^{2})_{n}}\sum_{m=n+1}^{\infty}(q^{2(m-n)};q^{2})_{n}\sum_{j=0}^{m}\frac{(-q^{-2n};q^{2})_{j}(-1;q^{2})_{m-j}}{(q^{2};q^{2})_{j}(q^{2};q^{2})_{m-j}}q^{(2n+1)j+m}\\
 & =1+(-1)^{n}q^{n(n+1)}\sum_{m=n+1}^{\infty}\sum_{j=0}^{m}\frac{(-q^{-2n};q^{2})_{j}(-1;q^{2})_{m-j}}{(q^{2};q^{2})_{j}(q^{2};q^{2})_{m-j}}q^{(2n+1)j+m}{m-1 \brack n}_{q^{2}},
\end{align*}
where for the fourth equality we used \eqref{eq:22-2}, for the sixth
equality we applied \eqref{eq:22-3}, for the third to last equality
we made the change of variables $i=m-j$ and the second to last equality
follows from the identity $(q^{2m-2n};q^{2})_{n}=0$ for $0<m\leq n$.
\qed

\section*{Acknowledgement}

 This work was partially supported by the National Natural Science
Foundation of China (Grant No. 11801451).

\end{document}